\begin{document}

\title{Noncoercive and Noncontinuous Equilibrium Problems 
}
\subtitle{Existence Theorem in Infinite Dimensional Spaces}


\author{F. Fakhar \and H. R. Hajisharifi \and Z. Soltani  
}


\institute{F. Fakhar \at
              Department of Pure Mathematics, Faculty of Mathematics and Statistics, University of Isfahan, Isfahan 81746-73441, Iran\\
              \email{f.fakhar66@gmail.com}           
           \and
           H. R. Hajisharifi \at
            Department of Mathematics, Khansar campus, University of Isfahan, Isfahan 81746-73441, Iran\\
            School of Mathematics, Institute for Research in Fundamental Sciences (IPM), P.O. Box: 19395-5746, Tehran, Iran\\
            \email{h.r.hajisharifi@sci.ui.ac.ir}
            \and
           Z. Soltani (corresponding author) \at
            Department of Pure Mathematics, University of Kashan, Kashan, 87317-53153,  I. R. Iran\\
            \email{z.soltani@kashanu.ac.ir}
}

\date{Received: date / Accepted: date}

\maketitle

\begin{abstract}
In this paper, we extend the definition of $qx$-asymptotic function, for extended real-valued function that define on an infinite dimensional topological normed space without lower semicontinuity or quasi-convexity condition. As the main result, by using some asymptotic conditions, we obtain sufficient optimality conditions for existence of solutions to equilibrium problems, under weaker assumptions of continuity and convexity, when the feasible set is an unbounded subset of an infinite dimensional space. Also, as a corollary, we obtain a necessary and sufficient optimality conditions for existence of solutions to equilibrium problems with unbounded feasible set.\\
Finally, as an application, we establish a result for existence of solutions to minimization problems.

\keywords{Equilibrium problem \and Minimum problem \and  Asymptotic analysis}
\subclass{49J10 \and 49K10 \and  90C26 \and 49J27}
\end{abstract}

\section{Introduction}
\label{intro}
Let $K$ be a nonempty subset of topological norm space $X$ and let $\psi:K\times K\rightarrow \Bbb{R}$ be a real-valued bifunction. The equilibrium problem was firstly formulated as follows by Blum and Ottli \cite{Blum.Oet}:
$$(EP)\quad \mbox{find}\quad \bar{x}\in K\quad\mbox{such that}\quad \psi(\bar{x},y)\geq 0, \quad\forall y\in K.$$
The set of its solutions is denoted by $S(\psi,K)$.
Equilibrium problems were first introduced in Ky Fan's pioneering work \cite{KyFan}, and they have been explored extensively over the years.
The equilibrium problems were studied in mathematics and economics; for example, see \cite{AWY,ACI,CS,DM,FB2,IL3,IL4,IS,Ka,Lara,LA,LAYL,Nash,NFr,Re} and the references therein.
Minimization problems, mixed variational problems, vector optimization problems, and linear complementary problems are all examples of classical mathematics problems that can be formulated as equilibrium problems.\\
If $K$ is a nonempty, compact, and convex set and $\psi$ is upper semicontinuous in its first argument and quasi-convex in its second argument, then $(EP)$ has a solution \cite{Oettli}. The study existence of solutions to $(EP)$ has received a lot of attention in recent years, and various existence results have been obtained where $K$ is compact or $\psi$ is under a coercive condition. We have the following result of \cite{KQ} as a recent result in this direction.
\begin{theorem}\label{main thm1}
	Let $Z$ be a compact topological space and $\psi:Z\times Z\rightarrow \Bbb{R}$ be cyclically anti-quasimonotone such that for each $y\in Z$, the set $[\psi(.,y)\geq 0]:=\{x\in Z:\psi(x,y)\geq 0\}$ is closed. Then, there exists $\bar{x}\in Z$ such that $\psi(\bar{x},y)\geq 0$, for all $y\in Z$.
\end{theorem}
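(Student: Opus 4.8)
The plan is to argue by contradiction, using compactness of $Z$ together with the closedness hypothesis to convert a hypothetical absence of solutions into a finite open cover of $Z$, and then to extract from that cover a finite cycle of points along which $\psi$ is everywhere negative, in direct conflict with cyclic anti-quasimonotonicity.

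In detail, suppose no $\bar x$ as in the statement exists. Then for every $x \in Z$ there is some $y \in Z$ with $\psi(x,y) < 0$, so the sets $V_y := Z \setminus [\psi(\cdot,y) \ge 0] = \{x \in Z : \psi(x,y) < 0\}$, each open since $[\psi(\cdot,y)\ge 0]$ is closed, cover $Z$. By compactness there are $y_1, \dots, y_n \in Z$ with $Z = \bigcup_{i=1}^n V_{y_i}$. In particular each $y_i$ belongs to $V_{y_j}$ for some $j$, so for every $i \in \{1,\dots,n\}$ we may pick $\sigma(i) \in \{1,\dots,n\}$ with $\psi(y_i, y_{\sigma(i)}) < 0$. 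The resulting self-map $\sigma$ of the finite set $\{1,\dots,n\}$, iterated from any index, must eventually revisit an index, hence contains a cycle $i_1 \mapsto i_2 \mapsto \cdots \mapsto i_m \mapsto i_1$ with $i_1,\dots,i_m$ pairwise distinct. Along this cycle $\psi(y_{i_j}, y_{i_{j+1}}) < 0$ for every $j$ (indices read cyclically), and this contradicts cyclic anti-quasimonotonicity of $\psi$ evaluated on the tuple $(y_{i_1},\dots,y_{i_m})$, which guarantees that along any finite cycle of points at least one forward value $\psi(x_j,x_{j+1})$ is nonnegative. Hence $S(\psi,Z) \ne \emptyset$.

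Equivalently, one can package this as the statement that the closed family $\{[\psi(\cdot,y)\ge 0]\}_{y\in Z}$ has the finite intersection property — the same cycle argument, applied to an arbitrary finite collection $y_1,\dots,y_n$, shows $\bigcap_{i=1}^n [\psi(\cdot,y_i)\ge 0] \ne \emptyset$ — after which compactness of $Z$ gives a point in the total intersection, which is exactly a solution. I do not expect a substantive obstacle here; the only care needed is combinatorial: one must ensure the extracted cycle has distinct indices so that the hypothesis legitimately applies, and note that a degenerate length-one cycle merely records $\psi(x,x) < 0$, already excluded by cyclic anti-quasimonotonicity. The genuine content is conceptual rather than technical — that this weak combinatorial condition on $\psi$ is precisely what replaces both convexity of the set and semicontinuity of the bifunction in the classical Ky Fan / Oettli statement.
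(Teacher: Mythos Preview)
Your argument is correct. Note, however, that the paper does not itself prove Theorem~\ref{main thm1}: it is quoted in the introduction as a known result from \cite{KQ}, so there is no ``paper's own proof'' to compare against directly.

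That said, the route you take is exactly the one the paper leans on elsewhere. In the proof of Corollary~\ref{main cor.}(iii) the authors invoke \cite[Proposition~1]{CS} to observe that cyclic anti-quasimonotonicity forces $\psi$ to be anti-locally dominated in $y$; unwinding the definition, this says precisely that for every finite set $\{y_1,\dots,y_n\}$ there is an $x$ with $\psi(x,y_i)\ge 0$ for all $i$, i.e., the family $\{[\psi(\cdot,y)\ge 0]\}_{y\in Z}$ has the finite intersection property. Your second paragraph recovers exactly this statement, and your cycle-extraction argument is the standard proof of it. So your approach and the literature's coincide.

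One small remark: the paper's Definition~\ref{Def tlc} of cyclic anti-quasimonotonicity explicitly allows the points $x_1,\dots,x_n$ to be ``possibly not all different'', so your care about extracting a cycle with \emph{distinct} indices is unnecessary --- the hypothesis applies to any finite tuple, repeated entries included. This does no harm to your argument, but you can drop that caveat.
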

We should note that, when the feasible set is compact or some coercivity assumptions are imposed on the function, it is rather easy to establish an optimality condition for $(EP)$. Then many researchers have studied existence of solutions to $(EP)$, by weakening the compactness and convexity conditions on $\psi$ and the coerciveness condition on $\psi$ until now; for example, see \cite{ACR,FB,IKS,KL,NT} and the references therein. Also, in \cite{NT} the authors used the new concept of generalized continuity, so-called transfer semicontinuity, to reduce the condition of upper semicontinuity of $\psi$ in the $(EP)$.\\
The so-called asymptotic directions and functions, which gave rise to the field of mathematics known as asymptotic analysis, are one technique for dealing with unbounded data. In \cite{FB}, the author, motivated by \cite{AGT,BBGT}, treated the noncoerciveness condition on $\psi$ by using the asymptotic analysis to the study of $(EP)$. Motivated by \cite{FB}, the notion of asymptotic function will be used to treat the noncoerciveness condition in this work.

This paper is organized as follows. In section 2, we have some basic definitions and results. In section 3, we extend the definition of $qx$-asymptotic function \cite{H1}, for extended real-valued function that define on an infinite dimensional space. Also, we establish some property of this asymptotic function. In section 4, as the main result of this paper, we verify new sufficient optimality conditions for existence of an $(EP)$ solution, when $K$ is an unbounded set, under weaker assumptions of continuity and generalized convexity, via asymptotic analysis. Also, in this section, we obtain a necessary and sufficient optimality condition for existence of solutions to $(EP)$. Finally, in section 5, as an application of our main result, we use asymptotic conditions to investigate existence of solutions to minimum problems, when the feasible set is an unbounded set.
\section{Preliminaries}
\label{sec:1}
In this section, we give some notions and results that will be used in this paper. The set of real numbers (resp. natural numbers and rational numbers) is denoted by $\Bbb R$ (resp. $\Bbb{N}$ and $\Bbb Q$). Throughout the paper suppose that $Y$ is a nonempty set, $(X,\|.\|)$ is a real normed space and $\sigma$ is a Hausdorff topology coarser than the strong one on $X$. As \cite{GLN}, when necessary we shall consider the following condition on $(X,\sigma)$.

{\bf Assumption} ${\bf(H_{\sigma})}$ The closed unit ball $B_X$ in $X$ is sequentially $\sigma$-compact, that is, $B_X$ is sequentially compact with $\sigma$-topology.

For instance, when $X$ is a reflexive Banach space and $\sigma$ is the weak topology on $X$ or $X$ is the dual of a separable Banach space and $\sigma$ is the weak$^{*}$ topology on $X$, the assumption $(H_\sigma)$ holds, \cite{GLN}.

We assume that $K$ is a nonempty subset of $(X,\|.\|)$ and for each $n\in\Bbb{N}$, $K_n:=\{x\in K: \|x\|\leq n\}$.
The convergence of a sequence in $X$ with $\sigma$-topology, will be denoted by $x_n\xrightarrow{\sigma}x$. Also, the convex hull, closure and $\sigma$-closure of $K$ are denoted by $\mathrm{co}(K)$, $\mathrm{cl}(K)$ and $\mathrm{cl}^\sigma(K)$, respectively.\\
Let $f:K\rightarrow \Bbb{R}\cup \{+\infty\}$ be a function. The effective domain of $f$ is defined by $dom (f):=\{x\in K: f(x)<+\infty\}$, moreover we say that $f$ is a proper function when $dom(f)$ is nonempty. We denote the sublevel set of $f$ at $\lambda\in \Bbb{R}$ by $[f\leq\lambda]:=\{x\in K: f(x)\leq\lambda\}$. Let $dom(f)$ be convex, then $f$ is said to be quasi-convex, if for every $x,y\in dom (f)$ and $\alpha\in [0,1]$,
$$f(\alpha x+(1-\alpha)y)\leq \max\{f(x),f(y)\}.$$
It is well-known that $f$ is quasi-convex iff $[f\leq\lambda]$ is convex, for all $\lambda\in \Bbb{R}$.
Recall that every convex function is quasi-convex. Also $f$ is said to be quasi-concave, if $-f$ be a quasi-convex function. For a further study on generalized convexity; see \cite{C1,H2}.\\
The function $f:K\rightarrow \Bbb{R}\cup \{+\infty\}$ is called \cite{T1}:
\begin{itemize}
	\item[$\bullet$] $\sigma$-lower semicontinuous ($\sigma$-lsc from now on) at $x_0\in K$ if, $[f\leq\lambda]$ is a $\sigma$-closed set, for all $\lambda\in \Bbb{R}$.
  \item[$\bullet$] sequentially $\sigma$-lower semicontinuous (sequentially $\sigma$-lsc from now on) at $x_0\in K$ if, $[f\leq\lambda]$ is a sequentially $\sigma$-closed (i.e., sequentially closed with $\sigma$-topology) set, for all $\lambda\in \Bbb{R}$.
	\item[$\bullet$] $\sigma$-upper semicontinuous ($\sigma$-usc from now on) at $x_0\in K$ if, $-f$ is $\sigma$-lsc at $x_0$.
	\item[$\bullet$] $\sigma$-transfer lower continuous at $x_0\in K$ if, $f(x_0)>f(y)$ for some $y\in K$, implies that there exist $x'\in K$ and a $\sigma$-neighborhood $\mathcal{V}_{\sigma}(x_0)\subseteq K$ of $x_0$ such that, $f(x)> f(x')$ for all $x\in \mathcal{V}_{\sigma}(x_0)$.
	\item[$\bullet$] $\sigma$-transfer lower continuous (resp. $\sigma$-lsc, sequentially $\sigma$-lsc, $\sigma$-usc) if, $f$ is $\sigma$-transfer lower continuous (resp. $\sigma$-lsc, sequentially $\sigma$-lsc, $\sigma$-usc) at each $x\in K$.
\end{itemize}
For a further study on generalized continuity of extended real-valued function; see \cite{A1} and the references therein.\\
\begin{definition}\cite{IL,NT}\label{Def tlc}
The bifunction $\psi: K\times Y\rightarrow \Bbb{R}$ is said to be
	\begin{itemize}
		\item[$\bullet$] $\sigma$-transfer lower semicontinuous ($\sigma$-tlsc from now on) in $x_0$ with respect to $Y$ if, for $(x_0,y)\in K\times Y$, $\psi(x_0,y)>0$ implies that there exist some point $y'\in Y$ and some $\sigma$-neighborhood $\mathcal{V}_{\sigma}(x_0)\subseteq K$ of $x_0$ such that $\psi(x, y')>0$ for all $x\in\mathcal{V}_{\sigma}(x_0)$. Also, we say that $\psi$ is $\sigma$-tlsc on $K$, when $Y=K$ and $\psi$ is $\sigma$-tlsc in $x$ with respect to $Y$ for every $x\in K$.
		\item[$\bullet$] locally dominated in $y$ if, for any finite subset $\{y_1,y_2,...,y_n\} \subseteq Y$, there exists $x\in K$ such that
		$$\max_{i\in\{1,2,...,n\}}\psi(x,y_i)\leq 0.$$
		\item[$\bullet$] transfer quasi-concave in $y$ if, for any finite subset $\{y_1,y_2,...,y_n\} \subseteq Y$, there exists a corresponding finite subset $\{x_1,x_2,...,x_n\} \subseteq K$ such that, for any subset $L\subseteq \{1,2,...,n\}$ and any $x\in \mathrm{co}\{x_i:i\in L\}$, we have $\min_{i\in L}\psi(x,y_i)\leq 0$.\\
		Also the bifunction $\psi: K\times K\rightarrow \Bbb{R}$ is said to be
		\item[$\bullet$] pseudomonotone on $K$ if, for every $x,y\in K$;
		$$\psi(x,y)\geq 0\Rightarrow \psi(y,x)\leq 0.$$
		\item[$\bullet$] cyclically anti-quasimonotone if, for any points $x_1,\ldots,x_n\in K$, possibly not all different, there exists an $i\in\{1,\ldots,n\}$ such that $\psi(x_i,x_{i+1})\geq 0$, where $x_{n+1}:=x_1$.
	\end{itemize}
\end{definition}
Motivated by Definition \ref{Def tlc}, the bifunction $\psi: K\times Y\rightarrow \Bbb{R}$ is said to be $\sigma$-transfer upper semicontinuous ($\sigma$-tusc from now on) in $x_0\in K$ with respect to $Y$, when $-\psi$ be $\sigma$-tlsc in $x_0$ with respect to $Y$, and $\psi$ is said to be $\sigma$-tusc on $K$, when $-\psi$ is $\sigma$-tusc on $K$.
Moreover, we say that $\psi$ is anti-locally dominated in $y$ (transfer quasi-convex in $y$) when $-\psi$ is  locally dominated in $y$ (transfer quasi-concave in $y$).
\begin{remark}\label{Remark c.a.qm}
By \cite{KQ}, let $\psi: K\times K\rightarrow \Bbb{R}$ is a cyclically anti-quasimonotone bifunction, then
	\begin{itemize}
		\item[(i)] for any $A\subseteq K$, $\psi|_A: A\times A\rightarrow \Bbb{R}$ is cyclically anti-quasimonotone.
		\item[(ii)] if $\psi': K\times K\rightarrow \Bbb{R}$ is defined by $\psi'(x,y)=\psi(y,x)$ for all $x,y\in K$, then $\psi'$ is cyclically anti-quasimonotone, too.
\end{itemize}
\end{remark}
In the following definition, we recall some generalizations of set-valued mappings with closed valued.
\begin{definition}\cite{LSS}
Let $\Lambda$ be a nonempty set. The set valued mapping $G:\Lambda\rightrightarrows X$ is said to be
\begin{itemize}
	 \item[(i)] $\sigma$-transfer closed on $\Lambda$, if
	$$\bigcap_{x\in \Lambda}\mathrm{cl^\sigma}(G(x))=\bigcap_{x\in \Lambda}G(x).$$
	\item[(ii)] $\sigma$-intersectionally closed on $\Lambda$, if
	$$\bigcap_{x\in \Lambda}\mathrm{cl^\sigma}(G(x))=\mathrm{cl^\sigma}(\bigcap_{x\in \Lambda}G(x)).$$
\end{itemize}
\end{definition}
In the sequel we have some basic definitions and properties of the $\sigma$-asymptotic cone and the $\sigma$-asymptotic function.

The $\sigma$-asymptotic cone $K^\infty_{\sigma}$ of $K$ is defined by \cite{A3},
$$K^\infty_{\sigma}:=\{u\in X: \exists t_k\rightarrow +\infty~\mbox{and}~ (x_k)\subseteq K~\mbox{s.t.}~ \frac{x_k}{t_k}\xrightarrow{\sigma} u\},$$
with the convention $\emptyset^\infty_{\sigma}=\emptyset$.
\begin{proposition} \cite{A3,GLN} \label{Prop1}
Let $A$ and $B$ be two nonempty subsets of $X$. Then the following hold.
	\begin{itemize}
		\item[(i)] Let $A\subseteq B$, then $A^\infty_{\sigma}\subseteq B^\infty_{\sigma}$.
		\item[(ii)] Let $A$ be a sequentially $\sigma$-closed and convex set, then $A^\infty_{\sigma}$ is sequentially $\sigma$-closed and convex, and for each $x\in A$ it follows that
	$$A^\infty_{\sigma}=\{u\in X: x+tu\in A,~\forall t>0\}.$$
		 \item[(iii)] Let $A$ be a sequentially $\sigma$-closed and convex set, then $A+ A^\infty_{\sigma}\subseteq A.$
		\item[(iv)] Let $(K_i)_{i\in I}$ be a family of nonempty sets in $X$, then
		$$(\bigcap_{i\in I}K_i)^\infty_{\sigma}\subseteq \bigcap_{i\in I}(K_i)^\infty_{\sigma}.$$
		In addition, the equality holds when $K_i$ is sequentially $\sigma$-closed and convex for each $i\in I$, and $\cap_{i\in I}K_i\neq\emptyset$.
	\end{itemize}
\begin{remark}\label{Rem1.cc}
Let $A$ be a nonempty $\sigma$-closed and convex subset of $X$. Clearly, for $x\in A$, the set $\{u\in X: x+tu\in A,~\forall t>0\}$ is $\sigma$-closed and convex. Then by Proposition \ref{Prop1}(ii), $A^\infty_{\sigma}$ is a $\sigma$-closed and convex cone.
\end{remark}

\end{proposition}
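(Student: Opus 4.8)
The statement is an immediate consequence of Proposition \ref{Prop1}(ii), so the plan is simply to assemble the pieces. First, since every $\sigma$-closed set is sequentially $\sigma$-closed, the set $A$ satisfies the hypotheses of Proposition \ref{Prop1}(ii) (it is sequentially $\sigma$-closed and convex). As $A$ is nonempty, fix any $x\in A$; the proposition then yields
\[
A^\infty_{\sigma}=\{u\in X: x+tu\in A,\ \forall t>0\}=:S_x .
\]
Hence it is enough to verify that $S_x$ is a $\sigma$-closed convex cone, which is precisely the content of the first sentence of the remark.

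For $\sigma$-closedness, I would write $S_x=\bigcap_{t>0}A_t$ with $A_t:=\{u\in X: x+tu\in A\}$. For each fixed $t>0$ the affine map $u\mapsto x+tu$ is $\sigma$-continuous (translations and scalar multiplications are $\sigma$-continuous, as for the weak and weak$^{*}$ topologies), so $A_t$ is the $\sigma$-continuous preimage of the $\sigma$-closed set $A$ and is therefore $\sigma$-closed; an arbitrary intersection of $\sigma$-closed sets is $\sigma$-closed, so $S_x$ is $\sigma$-closed. For convexity, if $u,v\in S_x$ and $\alpha\in[0,1]$, then for every $t>0$ one has $x+t(\alpha u+(1-\alpha)v)=\alpha(x+tu)+(1-\alpha)(x+tv)\in A$ by convexity of $A$, whence $\alpha u+(1-\alpha)v\in S_x$.

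Finally, the cone property is immediate from the description of $S_x$: $0\in S_x$ because $x\in A$, and if $u\in S_x$ and $\lambda>0$ then $x+t(\lambda u)=x+(t\lambda)u\in A$ for all $t>0$ since $t\lambda>0$, so $\lambda u\in S_x$. Putting these together, $A^\infty_{\sigma}=S_x$ is a $\sigma$-closed convex cone. There is no genuine obstacle here; the only point deserving a line of justification is the $\sigma$-continuity of $u\mapsto x+tu$, and even that is standard for the topologies under consideration (weak, weak$^{*}$).
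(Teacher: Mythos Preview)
Your proposal is correct and follows exactly the paper's approach: the paper simply asserts that the set $\{u\in X: x+tu\in A,\ \forall t>0\}$ is ``clearly'' $\sigma$-closed and convex and then invokes Proposition~\ref{Prop1}(ii), whereas you supply the routine verifications (preimage under the affine map, convexity, cone property) that the paper leaves implicit. The one caveat you flag---$\sigma$-continuity of $u\mapsto x+tu$---is not guaranteed by the paper's standing hypothesis that $\sigma$ is merely Hausdorff and coarser than the norm topology, but the paper makes the same tacit assumption, and it holds in all the intended examples (weak, weak$^*$).
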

For any given proper function $f:X\rightarrow \Bbb{R}\cup\{+\infty\}$, the $\sigma$-asymptotic function $f^\infty_{\sigma}:X\rightarrow \Bbb{R}\cup\{\pm\infty\}$ is defined by \cite{GLN},
$$f^\infty_{\sigma}(u)=\inf\{\liminf_{k\rightarrow +\infty}\frac{f(t_ku_k)}{t_k}: t_k\rightarrow +\infty, u_k\xrightarrow{\sigma} u\}.$$
Note that, when $f:K\rightarrow \Bbb{R} \cup\{+\infty\}$ be a proper function, we extend this function to all of $X$ by setting $f(x)=+\infty$, if $x\in X\setminus K$, and we obtain $f^\infty_{\sigma}$.

\section{A Generalized Asymptotic Function}
\label{sec:2}
At first, we extend the definition of $qx$-asymptotic function \cite{H1}, for extended real-valued function that define on an infinite dimensional topological normed space and it has neither lower semicontinuity nor quasi-convexity conditions. Also, we establish some property of this asymptoptic function.
\begin{definition}
	Let $f:X\rightarrow \Bbb{R}\cup \{+\infty\}$ be a proper function. The $\sigma$-generalized asymptotic ($\sigma g$-asymptotic) function  $f^{\sigma g}: X\rightarrow \Bbb{R}\cup \{\pm\infty\}$ is defined by
	$$f^{\sigma g}(u) :=\inf\{\lambda: u\in ([f\leq \lambda])^\infty_{\sigma}\}.$$
\end{definition}
For the proper function $f:K\rightarrow \Bbb{R}\cup\{+\infty\}$, we extend this function to all of $X$ by setting $f(x)=+\infty$, if $x\in X\setminus K$, and we obtain the $\sigma g$-asymptotic function $f^{\sigma g}$. It is clear that, if $f: \Bbb{R}^n \to \Bbb{R}\cup \{+\infty\}$ is a proper lsc (i.e. $\sigma$-lsc with $\sigma=\|.\|$) and quasi-convex function, then $f^{\sigma g}=f^{qx}$.
Notice that, there is no connection between $f^{\sigma g}$ and $f^\infty_\sigma$ or $f^\infty_q$, that was presented in \cite{IL4}. Indeed, similar to example in \cite{H1}, we consider the constant real-valued function $f(x)=\alpha$, for all $x\in X$.
We have $f^{\sigma g}\equiv \alpha$ and $f^\infty_q=f^\infty_\sigma\equiv 0$.
Hence, for $\alpha > 0$ we have $f^{\sigma g}> f^\infty_q=f^\infty_\sigma$, while for $\alpha < 0$ we have $f^\infty_\sigma=f^\infty_q > f^{\sigma g}$.\\
Let $f$ be quasi-convex and sequentially $\sigma$-lsc, thus similar to equation (12) in \cite{H1}, by Proposition \ref{Prop1}(iv) for each $\lambda\in \Bbb{R}$ with $[f\leq\lambda]\neq\emptyset$ we have
$$[f^{\sigma g}\leq\lambda]=\bigcap_{\mu>\lambda}([f\leq\mu])^{\infty}_{\sigma}=(\bigcap_{\mu>\lambda}[f\leq\mu])^{\infty}_{\sigma}=([f\leq\lambda])^{\infty}_{\sigma}.$$
Then, in this case, by $\emptyset^\infty_{\sigma}=\emptyset$ for each $\lambda\in \Bbb{R}$ we have
\begin{equation}\label{Eq1}
[f^{\sigma g}\leq\lambda]=([f\leq\lambda])^{\infty}_{\sigma}.
\end{equation}
Also, by the similar proof to \cite[Remark 3.1(i)]{H1} we have, $f^{\sigma g}$ is order preserving in the sense that, for two proper functions $f_1,f_2:X\rightarrow \Bbb{R}\cup\{+\infty\}$,
		\begin{center}
			$f_1\leq f_2$ implies that $(f_1)^{\sigma g}\leq (f_2)^{\sigma g}$.
		\end{center}

In the following, an analytic formula for the $\sigma g$-asymptotic function is given.
\begin{proposition}\label{propo FFH}
	Let $f:X\rightarrow \Bbb{R}\cup\{+\infty\}$ be a proper function, then for each $u\in X$ we have
	$$f^{\sigma g}(u)=\inf\{\liminf_{n\to +\infty} f(t_n d_n): t_n\rightarrow +\infty, d_n\xrightarrow{\sigma} u\}.$$
\end{proposition}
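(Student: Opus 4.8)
The plan is to prove the two inequalities separately. Write $g(u) := \inf\{\liminf_{n\to+\infty} f(t_n d_n) : t_n\to+\infty,\ d_n\xrightarrow{\sigma} u\}$ for the right-hand side, and $f^{\sigma g}(u) = \inf\{\lambda : u\in ([f\le\lambda])^\infty_\sigma\}$ for the left-hand side; the goal is $f^{\sigma g}(u)=g(u)$ for every $u\in X$.

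First I would show $f^{\sigma g}(u)\le g(u)$. Fix any sequences $t_n\to+\infty$ and $d_n\xrightarrow{\sigma}u$, and set $\ell := \liminf_{n\to+\infty} f(t_n d_n)$. It suffices to show $f^{\sigma g}(u)\le \ell$. If $\ell=+\infty$ there is nothing to prove, so assume $\ell<+\infty$ and pick any $\lambda>\ell$. Passing to a subsequence along which $f(t_n d_n)\to\ell$ (or is eventually $<\lambda$), we have $t_n d_n\in [f\le\lambda]$ for all large $n$; writing $x_n := t_n d_n$ we get $x_n/t_n = d_n\xrightarrow{\sigma}u$ with $t_n\to+\infty$, so by definition of the $\sigma$-asymptotic cone $u\in ([f\le\lambda])^\infty_\sigma$. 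Hence $f^{\sigma g}(u)\le\lambda$; letting $\lambda\downarrow\ell$ gives $f^{\sigma g}(u)\le\ell$, and taking the infimum over all admissible $(t_n),(d_n)$ yields $f^{\sigma g}(u)\le g(u)$.

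For the reverse inequality $g(u)\le f^{\sigma g}(u)$, I would take any $\lambda$ with $u\in ([f\le\lambda])^\infty_\sigma$ (if no such $\lambda$ exists, $f^{\sigma g}(u)=+\infty$ and we are done) and produce admissible sequences witnessing $g(u)\le\lambda$. By the hypothesis we need to leverage: $u\in ([f\le\lambda])^\infty_\sigma$ means there exist $s_k\to+\infty$ and $(y_k)\subseteq [f\le\lambda]$ with $y_k/s_k\xrightarrow{\sigma}u$. Setting $t_k := s_k$ and $d_k := y_k/s_k$, we have $t_k\to+\infty$, $d_k\xrightarrow{\sigma}u$, and $f(t_k d_k) = f(y_k)\le\lambda$ for all $k$, so $\liminf_k f(t_k d_k)\le\lambda$, giving $g(u)\le\lambda$. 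Taking the infimum over all such $\lambda$ gives $g(u)\le f^{\sigma g}(u)$.

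The two arguments are nearly symmetric and the only subtlety is bookkeeping around strict versus non-strict inequalities in the definition of $f^{\sigma g}$ (the infimum over $\lambda$ with $u$ in the asymptotic cone of the sublevel set $[f\le\lambda]$, not $[f<\lambda]$): in the first direction I must allow $\lambda>\ell$ and then pass to the limit, while in the second direction the sublevel set is already closed under the inequality, so $f(y_k)\le\lambda$ holds directly. The main thing to be careful about — the only place this could go wrong — is that no lower semicontinuity or quasi-convexity of $f$ is used, and indeed none is needed here: the identity is purely a matter of unwinding the two definitions through the definition of the $\sigma$-asymptotic cone, so I expect no real obstacle beyond this routine care with the order of quantifiers and the passage $\lambda\downarrow\ell$.
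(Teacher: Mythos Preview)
Your proof is correct and follows essentially the same route as the paper's: both arguments establish the two inequalities by unwinding the definitions of $f^{\sigma g}$ and of the $\sigma$-asymptotic cone, using in one direction that $u\in([f\le\lambda])^\infty_\sigma$ produces admissible sequences $(t_k,d_k)$ with $f(t_kd_k)\le\lambda$, and in the other that any pair $(t_n,d_n)$ with $\liminf f(t_nd_n)<\lambda$ yields (after passing to a subsequence) a witness for $u\in([f\le\lambda])^\infty_\sigma$. Your handling of the subsequence passage and the limit $\lambda\downarrow\ell$ is in fact slightly more explicit than the paper's, but the content is identical.
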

\begin{proof}
	Let $u\in X$ and $\alpha_u=\inf\{\liminf_{n\to +\infty} f(t_n d_n): t_n\rightarrow +\infty,d_n\xrightarrow{\sigma} u\}$.\\
If $\alpha_u<\mu$, there exist $t_n\rightarrow +\infty$ and $d_n\xrightarrow{\sigma} u$ such that $\lim_{n\to +\infty} f(t_n d_n)<\mu$. Thus, for sufficiently large $n\in \Bbb{N}$, $f(t_n d_n)<\mu$ and so $t_n d_n\in [f\leq \mu]$. Hence, $u\in ([f\leq \mu])^\infty_{\sigma}$, and therefore $f^{\sigma g}(u)\leq \mu$.
	Then
	$$f^{\sigma g}(u)\leq \alpha_u.$$
	For the converse, suppose that $f^{\sigma g}(u)<\mu$. Hence $u\in ([f\leq \mu])^\infty_{\sigma}$. Then there exist $(x_n)\subseteq [f\leq \mu]$ and $t_n\rightarrow +\infty$ such that $\frac{x_n}{t_n}\xrightarrow{\sigma} u$. Now for each $n\in \Bbb{N}$, we set $d_n=\frac{x_n}{t_n}$. Thus $f(t_n d_n)\leq\mu$ for each $n\in \Bbb{N}$. Therefore $\liminf_{n\to +\infty} f(t_n d_n)\leq\mu$ and we obtain $\alpha_u\leq\mu$. Then
	$$\alpha_u\leq f^{\sigma g}(u).$$
\end{proof}\qed
In the following proposition, we extend \cite[Remark 3.3]{H1} and \cite[Remark 3.1(i)]{H1}.
\begin{proposition}\label{P. property qr}
	Let $f:X\rightarrow \Bbb{R}\cup \{+\infty\}$ be a proper function, then
	\begin{itemize}
		\item[(i)] $\inf f^{\sigma g}=f^{\sigma g}(0)=\inf f.$
		\item[(ii)] $f^{\sigma g}$ is quasi-convex, $\sigma$-lsc (sequentially $\sigma$-lsc) and positively homogeneous of degree $0$, provided that $f$ be a quasi-convex and $\sigma$-lsc (sequentially $\sigma$-lsc) function.
	\end{itemize}
\end{proposition}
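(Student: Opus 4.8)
The plan is to route everything through the sublevel-set identity $[f^{\sigma g}\leq\lambda]=([f\leq\lambda])^{\infty}_{\sigma}$ recorded in \eqref{Eq1}, together with two elementary facts about $\sigma$-asymptotic cones: that $A^{\infty}_{\sigma}$ is a cone for every $A$, and that $0\in A^{\infty}_{\sigma}$ whenever $A\neq\emptyset$ (indeed, fixing $x\in A$ and $t_k\to+\infty$ one has $x/t_k\to 0$ in norm, hence $x/t_k\xrightarrow{\sigma}0$, since $\sigma$ is coarser than the norm topology).

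For part (i) I would prove the two inequalities directly from the definition of $f^{\sigma g}$, with no use of quasi-convexity or semicontinuity. If $\lambda<\inf f$ then $[f\leq\lambda]=\emptyset$, so $([f\leq\lambda])^{\infty}_{\sigma}=\emptyset$ by the convention $\emptyset^{\infty}_{\sigma}=\emptyset$; hence no such $\lambda$ can occur in the infimum defining $f^{\sigma g}(u)$, which gives $f^{\sigma g}(u)\geq\inf f$ for every $u\in X$, and in particular $\inf_X f^{\sigma g}\geq\inf f$. Conversely, $0$ lies in $([f\leq\lambda])^{\infty}_{\sigma}$ for every $\lambda$ with $[f\leq\lambda]\neq\emptyset$ by the remark above, and the set of such $\lambda$ has infimum $\inf f$; hence $f^{\sigma g}(0)\leq\inf f$. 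Combining with $f^{\sigma g}(0)\geq\inf_X f^{\sigma g}\geq\inf f$ yields $f^{\sigma g}(0)=\inf_X f^{\sigma g}=\inf f$. (Alternatively the lower bound $f^{\sigma g}\geq\inf f$ is immediate from the analytic formula of Proposition \ref{propo FFH}.)

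For part (ii), positive homogeneity of degree $0$ is the soft part and needs no hypothesis on $f$: for every $\lambda$ the set $([f\leq\lambda])^{\infty}_{\sigma}$ is a cone, since $x_k/t_k\xrightarrow{\sigma}u$ implies $x_k/(t_k/s)\xrightarrow{\sigma}su$ with $t_k/s\to+\infty$ for each $s>0$; thus $u\in([f\leq\lambda])^{\infty}_{\sigma}$ is equivalent to $su\in([f\leq\lambda])^{\infty}_{\sigma}$, and taking infima over $\lambda$ in the definition of $f^{\sigma g}$ gives $f^{\sigma g}(su)=f^{\sigma g}(u)$ for all $s>0$. For quasi-convexity and $\sigma$-lower semicontinuity I would invoke \eqref{Eq1}: assuming $f$ quasi-convex and $\sigma$-lsc (resp. sequentially $\sigma$-lsc) — note $\sigma$-lsc implies sequentially $\sigma$-lsc, so \eqref{Eq1} is available in both cases — each $[f\leq\lambda]$ is convex and $\sigma$-closed (resp. sequentially $\sigma$-closed), so by Remark \ref{Rem1.cc} (resp. Proposition \ref{Prop1}(ii)) its $\sigma$-asymptotic cone, which by \eqref{Eq1} equals $[f^{\sigma g}\leq\lambda]$, is convex and $\sigma$-closed (resp. sequentially $\sigma$-closed); the case $[f\leq\lambda]=\emptyset$ is trivial. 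Running this over all $\lambda\in\Bbb{R}$, and noting that $\mathrm{dom}(f^{\sigma g})=\bigcup_{\lambda}([f\leq\lambda])^{\infty}_{\sigma}$ is convex as an increasing union of convex sets (Proposition \ref{Prop1}(i)), one concludes via the characterization ``quasi-convex $\Leftrightarrow$ all sublevel sets convex'' and the definitions of $\sigma$-lsc / sequentially $\sigma$-lsc that $f^{\sigma g}$ has the stated properties.

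The only delicate point is the bookkeeping around empty sublevel sets and the values $\pm\infty$: one must keep \eqref{Eq1} applied only in the regime where it was established, read the degenerate cases ($[f\leq\lambda]=\emptyset$, or $f^{\sigma g}\equiv-\infty$ when $\inf f=-\infty$) directly off the definitions, and verify quasi-convexity in the extended-real-valued sense. I do not expect any serious obstacle beyond this, since the substantive step — transporting the structure of $f$ to the asymptotic cones of its sublevel sets — was already carried out in deriving \eqref{Eq1} and Proposition \ref{propo FFH}.
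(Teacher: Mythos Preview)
Your proposal is correct. For part (ii) it coincides with the paper's argument: both invoke \eqref{Eq1} together with Remark~\ref{Rem1.cc} (resp.\ Proposition~\ref{Prop1}(ii)) to see that each sublevel set $[f^{\sigma g}\le\lambda]$ is a $\sigma$-closed (resp.\ sequentially $\sigma$-closed) convex cone, and conclude from there; your remark that positive homogeneity of degree $0$ needs no hypothesis on $f$, since $A^{\infty}_{\sigma}$ is a cone for arbitrary $A$, is a small sharpening the paper does not isolate. For part (i) the paper takes a different tack, working through the analytic formula of Proposition~\ref{propo FFH}: the bound $f^{\sigma g}(0)\geq\inf f$ is read off that formula, and the reverse inequality is obtained by contradiction, testing with $t_n=n$, $d_n=x/n$ for a point $x$ with $f(x)<f^{\sigma g}(0)$. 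Your direct sublevel-set argument (empty sublevel sets below $\inf f$; $0\in A^{\infty}_{\sigma}$ whenever $A\neq\emptyset$) is more elementary and self-contained, bypassing Proposition~\ref{propo FFH} entirely; the paper's route has the compensating advantage that, once the analytic formula is available, both inequalities become essentially one line each.
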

\begin{proof}
	$(i)$ By Proposition \ref{propo FFH}, it is clear that $f^{\sigma g}(0)\geq \inf f$. On the contrary, suppose that $f^{\sigma g}(0)> \inf f$. Therefore there exist $\lambda>\inf f$ and $x\in X$ such that $f^{\sigma g}(0)>\lambda> f(x)$. For each $n\in \Bbb{N}$ we set $t_n=n$ and $d_n=\frac{x}{n}$, so $f(x)=\liminf_{n\to \infty} f(t_n d_n)\geq f^{\sigma g}(0)>\lambda>f(x)$, which is a contradiction. Then, $f^{\sigma g}(0)=\inf f$. On the other hand, since by Proposition \ref{propo FFH} for each $u\in X$, $f^{\sigma g}(u)\geq \inf f$, then $\inf f^{\sigma g}=f^{\sigma g}(0)=\inf f$.\\
 $(ii)$ Let $f$ be a quasi-convex and $\sigma$-lsc function. Hence by (\ref{Eq1}) and Remark \ref{Rem1.cc}, for each $\lambda\in \Bbb{R}$, $[f^{\sigma g}\leq\lambda]$ is a $\sigma$-closed and convex cone. Then $f^{\sigma g}$ is quasi-convex, $\sigma$-lsc and positively homogeneous of degree $0$.\\
 If $f$ is sequentially $\sigma$-lsc we have the similar proof.
 \end{proof}
 In \cite{H1}, one of the most interesting results is \cite[Proposition 4.2]{H1}. Now the extension of this result, for infinite dimensional Banach spaces, is given below.
\begin{proposition}\label{boundedness of f and fqx}
Let $(X,\|.\|)$ be a Banach space and let $f:X\rightarrow \Bbb{R}\cup\{+\infty\}$ be a proper quasi-convex and sequentially $\sigma$-lsc function. Then $f^{\sigma g}$ is real-valued iff $f$ is a bounded function.
\end{proposition}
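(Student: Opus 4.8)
The plan is to prove the two implications separately; the direction ``$f$ bounded $\Rightarrow f^{\sigma g}$ real-valued'' is routine, and the converse is the substantial part, which I would split into boundedness from below (immediate) and boundedness from above (a Baire-category argument).

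For the easy direction, suppose $m\le f\le M$ on $X$. Then $[f\le M]=X$, which is sequentially $\sigma$-closed and convex, so Proposition~\ref{Prop1}(ii) gives $([f\le M])^\infty_\sigma=X^\infty_\sigma=X$; hence $u\in([f\le M])^\infty_\sigma$ for every $u$, so $f^{\sigma g}(u)\le M$. On the other hand $f^{\sigma g}(u)\ge\inf f^{\sigma g}=\inf f=m$ by Proposition~\ref{P. property qr}(i). Thus $m\le f^{\sigma g}\le M$, so $f^{\sigma g}$ is real-valued.

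For the converse, assume $f^{\sigma g}$ is real-valued. Boundedness from below is immediate: $\inf f=f^{\sigma g}(0)\in\Bbb R$ by Proposition~\ref{P. property qr}(i). To prove boundedness from above I would argue by contradiction, supposing $f$ is not bounded above. Since $f^{\sigma g}$ is finite-valued, the definition of $f^{\sigma g}$ together with the monotonicity of $\lambda\mapsto([f\le\lambda])^\infty_\sigma$ (a consequence of Proposition~\ref{Prop1}(i)) shows $X=\bigcup_{n\in\Bbb N}([f\le n])^\infty_\sigma$, a countable union. Each $[f\le n]$ is sequentially $\sigma$-closed (as $f$ is sequentially $\sigma$-lsc) and convex (as $f$ is quasi-convex), so by Proposition~\ref{Prop1}(ii) each $([f\le n])^\infty_\sigma$ is a sequentially $\sigma$-closed convex cone (conicity being immediate from the definition); and since norm convergence implies $\sigma$-convergence, ``sequentially $\sigma$-closed'' forces ``norm-closed''. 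As $X$ is a Banach space it is a Baire space, so some $([f\le N])^\infty_\sigma$ has nonempty norm interior: fix $u_0\in X$ and $r>0$ with $\bar B(u_0,r)\subseteq([f\le N])^\infty_\sigma$, and note $[f\le N]\neq\emptyset$.

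The decisive step is to upgrade this to $([f\le\mu])^\infty_\sigma=X$ for a suitable $\mu$. Choose $M'\in\Bbb N$ with $-u_0\in([f\le M'])^\infty_\sigma$ (possible since $f^{\sigma g}(-u_0)<+\infty$), put $\mu=\max\{N,M'\}$ and $C:=([f\le\mu])^\infty_\sigma$. By monotonicity, $C$ is a convex cone containing both $\bar B(u_0,r)$ and $-u_0$; a convex cone is closed under addition, so $w=(u_0+w)+(-u_0)\in C$ for every $w$ with $\|w\|\le r$, whence $\bar B(0,r)\subseteq C$, and since $C$ is a cone, $C=X$. Then Proposition~\ref{Prop1}(iii) gives $[f\le\mu]+X=[f\le\mu]+C\subseteq[f\le\mu]$, and since $[f\le\mu]\supseteq[f\le N]\neq\emptyset$ this forces $[f\le\mu]=X$, i.e. $f\le\mu$ on $X$, contradicting the assumption. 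Hence $f$ is bounded.

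\textbf{Main obstacle.} I expect the crux to be precisely the last paragraph. In finite dimensions \cite[Proposition~4.2]{H1} is proved by extracting from a single sublevel set's asymptotic cone finitely many directions spanning $\Bbb R^n$; that device is unavailable here and must be replaced by a Baire-category localization of one sublevel set with ``fat'' asymptotic cone, followed by the trick of adding $-u_0$ (available exactly because $f^{\sigma g}$ is finite at $-u_0$) to pull the whole ball $\bar B(0,r)$, hence all of $X$, into that cone. A secondary point worth flagging is the automatic passage from sequential $\sigma$-closedness to norm-closedness, without which the Baire category theorem could not be invoked.
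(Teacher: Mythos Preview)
Your proof is correct and follows essentially the same route as the paper: both directions are handled via the identity $\inf f=f^{\sigma g}(0)$, and the substantive part (bounded above) proceeds by writing $X=\bigcup_{n}([f\le n])^\infty_\sigma$, invoking Baire to get a cone with nonempty norm-interior, adding $-u_0$ from another cone to pull a ball around the origin inside, and then using Proposition~\ref{Prop1}(iii) to conclude $[f\le p]=X$. The only cosmetic differences are that you frame it by contradiction and derive the covering $X=\bigcup_n([f\le n])^\infty_\sigma$ directly from the definition of $f^{\sigma g}$, whereas the paper appeals to equation~(\ref{Eq1}).
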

\begin{proof}
By Propositions \ref{propo FFH} and \ref{P. property qr}(i), we have
$$\inf f=f^{\sigma g}(0)=\inf f^{\sigma g}\leq \sup f^{\sigma g}\leq \sup f.$$
Then, $f^{\sigma g}$ is real-valued if $f$ is bounded. Also $f$ is bounded from below when $f^{\sigma g}$ is a real-valued function. Therefore we should prove that, if $f^{\sigma g}$ is a real-valued function, then $f$ is bounded from above. For this purpose by (\ref{Eq1}),
	\begin{equation}\label{7}
		X=\bigcup_{m=1}^\infty [f^{\sigma g}\leq m]=\bigcup_{m=1}^\infty([f\leq m])^\infty_{\sigma}.
	\end{equation}
Since $f$ is sequentially $\sigma$-lsc, for each $m\in\mathbb{N}$, $([f\leq m])^\infty_{\sigma}$ is sequentially $\sigma$-closed and so, $([f\leq m])^\infty_{\sigma}$ is closed in norm topology, for each $m\in\Bbb{N}$. Hence by Baire category theorem, there exists $m_0\in\Bbb{N}$ such that $([f\leq m_0])^\infty_{\sigma}$ has nonempty interior. Thus there exist $x_0\in X$ and a $\|.\|$-neighborhood $\mathcal{V}_{\|.\|}(x_0)$ of $x_0$ such that $\mathcal{V}_{\|.\|}(x_0)\subseteq ([f\leq m_0])^\infty_{\sigma}$ and there exists $m\in\Bbb{N}$ such that $-x_0\in ([f\leq m])^\infty_{\sigma}$. Then, there exists $p\in \Bbb{N}$ such that zero is in the interior of $([f\leq p])^\infty_{\sigma}$. Therefore, $([f\leq p])^\infty_{\sigma}=X$ and by Proposition \ref{Prop1}(iii),
$$[f\leq p]+([f\leq p])^\infty_{\sigma}\subseteq [f\leq p].$$
Then $[f\leq p]=X$ and $f$ is bounded from above with $p$ as an upper bound.
\end{proof}
By the following example we show that, in Proposition \ref{boundedness of f and fqx}, it is necessary that $f$ is a qusi-convex and sequentially $\sigma$-lsc function and we can not omit these conditions.
\begin{example}
Let $(X,\|.\|)=(l_p,\|.\|_p)$ for $1\leq p\leq\infty$, let $\sigma$ be the usual norm topology on $l_p$, and let $K=\{x=(x_i)\in l_p: x_i\in \mathbb{Q},~\forall i\in\mathbb{N}\}$. Suppose that $f(x)=\psi_K(x)$ (which is equal to $0$ on $K$ and equal to $+\infty$ on $X\setminus K$). Then $f^{\sigma g}\equiv 0$, but $f$ is not bounded from above, because, it is not quasi-convex nor sequentially $\sigma$-lsc.
\end{example}

\section{Existence Results}
In this section, as the main result of this paper, by using some asymptotic conditions, we obtain sufficient optimality conditions for existence of solutions to $(EP)$, when $K$ is an unbounded subset of $X$. For this purpose we need some definitions.\\
At first, motivated by the condition $(K)$ in \cite{MM} we have the following definition.
\begin{definition}
Let $\psi:K\times K\to \Bbb{R}$. We say that $\psi$ satisfies condition $(K_{\sigma})$, if for any unbounded sequence $(x_n)$ such that for each $n\in \Bbb{N}$, $x_n\in S(\psi,K_n)$, there exist $(x_{n_k})\subseteq (x_n)$, $(w_k)\subseteq K$ and $0 \neq d\in K^\infty_{\sigma}$ such that for each $k\in \Bbb{N}$, $\psi(x_{n_k},y)\leq \psi(w_k,y)$ for all $y\in K_{n_k}$, and $\frac{w_k}{\|w_k\|}\xrightarrow{\sigma} d$.
\end{definition}
\begin{example}\label{Exam.K-W}
\begin{itemize}
\item[(i)] Suppose that $(X,\|.\|)$ is a finite dimensional space, $K\subseteq X$ is nonempty, $\sigma=\|.\|$, and $\psi:K\times K\to \Bbb{R}$ is a bifunction. Then $\psi$ satisfies the condition $(K_{\sigma})$. Because, let $(x_n)$ be an unbounded sequence such that for each $n\in \Bbb{N}$, $x_n\in S(\psi,K_n)$, therefore there exist $(x_{n_k})\subseteq (x_n)$ and $0 \neq d\in K^\infty_{\sigma}$ such that $\frac{x_{n_k}}{\|x_{n_k}\|}\xrightarrow{\sigma} d$. It is sufficient that for each $k\in\Bbb N$ we set $w_k=x_{n_k}$.
\item[(ii)] Assume that $r\in [0,+\infty)$ and $\psi:K\times K\rightarrow\mathbb{R}$ is defined by $\psi(x,y)=r$ for each $x,y\in K$. Let $(x_n)\subseteq K$ is an unbounded sequence such that $x_n\in S(\psi, K_n)$ for each $n\in \mathbb{N}$. Let $x\in K\setminus \{0\}$. If for all $n\in \Bbb{N}$, we set $w_n=x$, therefore for each $n\in \Bbb{N}$, $\psi(x_{n},y)\leq \psi(w_n,y)$ for all $y\in K_{n}$, and $\frac{w_n}{\|w_n\|}\xrightarrow{\sigma} \frac{x}{\|x\|}\neq 0$. Then $\psi$ satisfies the condition $(K_{\sigma})$.
\item[(iii)] Let $\sigma=\|.\|$ and $f: K\to \Bbb R$ satisfies the condition $(K)$, that defined in \cite{MM}. Then it is not hard to see that the bifunction $\psi:K\times K\rightarrow\mathbb{R}$, that is defined by $\psi(x,y)=f(y)-f(x)$ for all $x,y\in K$, satisfies the condition $(K_{\sigma})$.
\end{itemize}
\end{example}
Now, motivated by definition of the set $R$ in \cite{FB}, related to $\psi:K\times K\rightarrow\mathbb{R}$, we consider the set
$$R_{\sigma}^{E}:=\{u\in K^\infty_{\sigma}: (-\psi(.,y))^{\sigma g}(u)\leq 0,~ \forall y\in K\}.$$
Now we have the main result of this paper.
\begin{theorem}\label{main thm}
Let $(X,\sigma)$ has the assumption $(H_\sigma)$ and $K$ be a nonempty $\sigma$-closed subset of $X$. Assume that $\psi: K\times K \to \Bbb{R}$ satisfies the condition $(K_{\sigma})$ and $S(\psi,K_n)$ is nonempty and $\sigma$-closed for each $n\in\Bbb{N}$. If $R_{\sigma}^E\subseteq\{0\}$, then $S(\psi,K)$ is nonempty and $(S(\psi,K))^\infty_{\sigma}=\{0\}$.
\end{theorem}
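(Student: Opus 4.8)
The plan is to obtain a solution of $(EP)$ as a $\sigma$-limit of the truncated solutions $x_n\in S(\psi,K_n)$, and to use the asymptotic hypothesis $R^E_\sigma\subseteq\{0\}$ only to rule out the degenerate scenario in which the relevant sequences escape to infinity. Two observations will be used throughout: first, $S(\psi,K)=\bigcap_{y\in K}[-\psi(.,y)\leq 0]$ and $K=\bigcup_{m\in\mathbb{N}}K_m$; second, $u\in([-\psi(.,y)\leq 0])^\infty_\sigma$ forces $(-\psi(.,y))^{\sigma g}(u)\leq 0$, which is immediate from the definition of $f^{\sigma g}$ since $\lambda=0$ then belongs to the set over which the infimum is taken.

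\textbf{Step 1: a compactness lemma.} First I would isolate the following, using only $(H_\sigma)$ and the $\sigma$-closedness of the sets $S(\psi,K_n)$: \emph{if $(z_k)\subseteq K$ is bounded and there are integers $m_k\to+\infty$ with $\psi(z_k,y)\geq 0$ for all $y\in K_{m_k}$, then $S(\psi,K)\neq\emptyset$.} Indeed, fix any integer $m\geq\sup_k\|z_k\|$; for $k$ large one has $m_k\geq m$, so $z_k\in K_m$ and $\psi(z_k,y)\geq 0$ on $K_m\subseteq K_{m_k}$, i.e. $z_k\in S(\psi,K_m)$ eventually. By $(H_\sigma)$ some subsequence $z_{k_j}\xrightarrow{\sigma}\bar z$, and since each $S(\psi,K_m)$ with $m\geq\sup_k\|z_k\|$ is $\sigma$-closed, $\bar z\in S(\psi,K_m)$ for every such $m$; hence $\psi(\bar z,y)\geq 0$ for all $y\in\bigcup_mK_m=K$, that is, $\bar z\in S(\psi,K)$.

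\textbf{Step 2: the dichotomy (where I expect the main obstacle).} Choose $x_n\in S(\psi,K_n)$. If $(x_n)$ has a bounded subsequence, the lemma (with $z_k=x_{n_k}$, $m_k=n_k$) already gives $S(\psi,K)\neq\emptyset$. Otherwise $\|x_n\|\to+\infty$, and condition $(K_\sigma)$ yields $(x_{n_k})$, $(w_k)\subseteq K$ and $0\neq d\in K^\infty_\sigma$ with $\psi(w_k,y)\geq\psi(x_{n_k},y)\geq 0$ for all $y\in K_{n_k}$ and $w_k/\|w_k\|\xrightarrow{\sigma}d$. If $(w_k)$ has a bounded subsequence, the lemma (now with $z_k=w_k$) again gives $S(\psi,K)\neq\emptyset$. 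The remaining case, $\|w_k\|\to+\infty$, is the one I expect to be the crux, since $(K_\sigma)$ by itself does not force $\|w_k\|\to+\infty$ (cf. Example \ref{Exam.K-W}(ii), where the $w_k$ may be taken constant). In this case, fixing $y\in K$, for $k$ large we have $y\in K_{n_k}$, hence $w_k\in[-\psi(.,y)\leq 0]$; then $\|w_k\|\to+\infty$ together with $w_k/\|w_k\|\xrightarrow{\sigma}d$ gives, straight from the definition of the $\sigma$-asymptotic cone, $d\in([-\psi(.,y)\leq 0])^\infty_\sigma$, so $(-\psi(.,y))^{\sigma g}(d)\leq 0$. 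As $y\in K$ was arbitrary and $d\in K^\infty_\sigma$, this says $d\in R^E_\sigma$ with $d\neq 0$, contradicting $R^E_\sigma\subseteq\{0\}$. Hence this last case cannot occur and, in all cases, $S(\psi,K)$ is nonempty.

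\textbf{Step 3: the asymptotic cone of $S(\psi,K)$.} The inclusion $\{0\}\subseteq(S(\psi,K))^\infty_\sigma$ is automatic since $S(\psi,K)\neq\emptyset$. For the reverse inclusion, write $S(\psi,K)=\bigcap_{y\in K}[-\psi(.,y)\leq 0]\subseteq K$ (each set in the intersection being nonempty, as it contains $S(\psi,K)$) and apply Proposition \ref{Prop1}(iv) and (i): any $u\in(S(\psi,K))^\infty_\sigma$ then lies in $K^\infty_\sigma$ and in $([-\psi(.,y)\leq 0])^\infty_\sigma$ for every $y\in K$, whence $(-\psi(.,y))^{\sigma g}(u)\leq 0$ for every $y\in K$; therefore $u\in R^E_\sigma\subseteq\{0\}$, i.e. $u=0$.
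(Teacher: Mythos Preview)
Your proof is correct and follows essentially the same approach as the paper: pick $x_n\in S(\psi,K_n)$, pass to a $\sigma$-limit when a bounded subsequence exists, invoke $(K_\sigma)$ otherwise, and in the residual case $\|w_k\|\to+\infty$ show $d\in R^E_\sigma\setminus\{0\}$ to reach a contradiction; the asymptotic-cone inclusion in Step~3 is likewise the paper's argument, only rephrased via Proposition~\ref{Prop1}(i),(iv) rather than sequences. Your packaging of the bounded-case argument as a standalone ``compactness lemma'' is in fact a little cleaner than the paper's version, which handles the two bounded situations (for $(x_n)$ and for $(w_k)$) by repeating essentially the same reasoning.
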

\begin{proof}
	Suppose that $x_n\in S(\psi, K_n)=\cap_{y\in K_n}\{x\in K_n: \psi(x,y)\geq 0\}$, for each $n\in\Bbb{N}$.
	Let $(x_n)$ has a $\sigma$-convergent subsequence $(x_{n_k})$ to $\bar{x}\in K$. Then there exists $m_0$ such that $\{x_{n_k}: k\in\Bbb{N}\}\cup \{\bar{x}\}\subseteq K_{m_0}$. Thus, for all $n_k\geq m\geq m_0$ and $y\in K_m$, $\psi(x_{n_k},y)\geq 0$ and $x_{n_k}\in K_m$. Hence for $n_k\geq m\geq m_0$, $x_{n_k}\in S(\psi,K_m)$. Therefore by $\sigma$-closedness of $S(\psi,K_m)$, for all $m\geq m_0$, $\bar{x}\in S(\psi,K_m)$. Then for each $y\in K$, $\psi(\bar{x},y)\geq 0$, and we have $\bar{x}\in S(\psi, K)$.\\
	Now assume that $\|x_n\|\rightarrow +\infty$. Then by the condition $(K_{\sigma})$, there exist $(x_{n_k})\subseteq (x_n)$, $(w_k)\subseteq K$ and $0\neq d\in K^\infty_{\sigma}$ such that for all $y\in K_{n_k}$, $\psi(x_{n_k},y)\leq\psi(w_k,y)$ and $\frac{w_k}{\|w_k\|}\xrightarrow{\sigma} d$.\\
If $(w_k)$ has a bounded subsequence, by the assumption $(H_{\sigma})$, it has a $\sigma$-convergent subsequence. Then by the similar proof, as the case that $(x_n)$ has a $\sigma$-convergent subsequence, $S(\psi, K)$ is nonempty.\\
	Therefore suppose that $\|w_k\|\rightarrow +\infty$. Since for all $y\in K_{n_k}$, $\psi(w_k,y)\geq \psi(x_{n_k},y)\geq 0$, hence for each $y\in K$, there exists $k_y\in\Bbb{N}$ such that for all $k\geq k_y$,
	$$-\psi(w_k,y)\leq -\psi(x_{n_k},y)\leq 0.$$
	Then
	$$(-\psi(.,y))^{\sigma g}(d)\leq 0.$$
	Which is a contradiction with $R_{\sigma}^E\subseteq\{0\}$.\\
	To show $(S(\psi,X))^\infty_{\sigma}=\{0\}$, let $u\in (S(\psi,K))^\infty_{\sigma}$. Hence there exists $(x_n)\subseteq	S(\psi,K)$ and $t_n\rightarrow +\infty$ such that $\frac{x_n}{t_n}\xrightarrow{\sigma} u$. Since for each $n\in\Bbb{N}$ and each $y\in K$, $-\psi(x_n,y)\leq 0$, therefore $(-\psi(.,y))^{\sigma g}(u)\leq 0$ for all $y\in K$. Thus $u\in R_{\sigma}^E\subseteq\{0\}$. Then $(S(\psi,K))^\infty_{\sigma}=\{0\}$.
\end{proof}
In the following example, we illustrate Theorem \ref{main thm}.
\begin{example}
	Suppose that $(X,\|.\|)=(l_p,\|.\|_p)$ for $1<p<\infty$, where $\|.\|_p$ is the standard norm on $l_p$, $\sigma$ is the weak topology on $X$, and $(\alpha_i)$ be a sequence in $\Bbb{R}$ such that $\sum_{i=1}^{+\infty}\alpha_i=+\infty$ and $\alpha_1\geq 1$. Put $C:=\{x=(x^i)\in \ell_p: 0\leq x^i\leq \alpha_{i}^{\frac{1}{p}}\}$. Then $C$ is a convex, $\sigma$-closed and unbounded subset of $\ell_p$ such that $C^{\infty}_{\sigma}=\{0\}$, \cite{M0}. Let $K=l_p$ and let $\psi:\ell_p\times\ell_p\rightarrow \Bbb{R}\cup\{+\infty\}$ be defined by
	$$\psi(x,y)=\left\{
	\begin{array}{ll}
		0,&~ x \vee y\in C,\\
		-1,&~ \mbox{otherwise}.
	\end{array}\right.$$
	Therefore for each $n\in \Bbb{N}$, $S(\psi,K_n)=\{x\in C: \|x\|_p\leq n\}$, which is nonempty and $\sigma$-closed. Also, let $(x_n)\subseteq K$ is an unbounded sequence such that $x_n\in S(\psi, K_n)$ for each $n\in \Bbb{N}$. For all $n\in \Bbb{N}$, we put $w_n=e_1$, where $e_1=(1,0,0,...)$, therefore for each $n\in \Bbb{N}$, $\psi(x_{n},y)\leq 0=\psi(w_n,y)$ for all $y\in K_n$ and $\frac{w_n}{\|w_n\|_p}\xrightarrow{\sigma} e_1\neq 0$. Hence $\psi$ satisfies the condition $(K_{\sigma})$. Also for each $u\in l_p$ and $y\notin C$ by Proposition \ref{propo FFH} we have
	$$(-\psi(.,y))^{\sigma g}(u)=\inf\{\liminf_{n\to +\infty} -\psi(t_nd_n,y): t_n\rightarrow +\infty, d_n\xrightarrow{\sigma} u\}=1>0.$$
Then $R_{\sigma}^E\subseteq\{0\}$, and all the conditions in Theorem \ref{main thm} are satisfied.\\
Note that in this example $S(\psi,K)=C$, thus $S(\psi,K)$ is nonempty and $(S(\psi,K))^\infty_{\sigma}=\{0\}$.
\end{example}
In the following corollary we have noncoercive version of \cite[Theorem 3.1]{NT} and \cite[Corollary 3.1]{NT} and some other results of Theorem \ref{main thm}.
\begin{corollary}\label{main cor.}
Suppose that $(X,\sigma)$ has the assumption $(H_\sigma)$, $B_X$ is $\sigma$-compact, and $K$ is a nonempty $\sigma$-closed and convex subset of $X$. Assume that $\psi: K\times K \to \Bbb{R}$ satisfies the condition $(K_{\sigma})$ and it is $\sigma$-tusc on $K_n$, for each $n\in\Bbb{N}$. Let $R_{\sigma}^E\subseteq\{0\}$. Then $S(\psi,K)$ is nonempty and $(S(\psi,K))^\infty_{\sigma}=\{0\}$, if at least one of the following holds.
	\begin{itemize}
		\item[(i)] For each $n\in \Bbb{N}$, $\psi|_{K_n\times K_n}(x, y)$ is anti-locally dominated in $y$.
		\item[(ii)] For each $n\in \Bbb{N}$, $\psi|_{K_n\times K_n}(x, y)$ is transfer quasi-convex in $y$.
		\item[(iii)] $\psi$ is cyclically anti-quasimonotone.
		\item[(iv)] For every $x\in K$, $\psi(x,.)$ is quasi-convex and for each $x\in K$, $\psi(x,x)\geq 0$.
		\item[(v)] $-\psi$ is pseudomonotone, for every $y\in K$, $\psi(.,y)$ is quasi-concave and for each $x\in K$, $\psi(x,x)=0$.
	\end{itemize}
\end{corollary}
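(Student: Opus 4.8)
The strategy is to deduce Corollary \ref{main cor.} from Theorem \ref{main thm}. Since the hypotheses that $\psi$ satisfies $(K_\sigma)$ and that $R_\sigma^E\subseteq\{0\}$ are already assumed, the whole task is to check that, under each of (i)--(v), the set $S(\psi,K_n)$ is nonempty and $\sigma$-closed for every $n\in\Bbb{N}$; then Theorem \ref{main thm} applies verbatim and yields that $S(\psi,K)$ is nonempty with $(S(\psi,K))^\infty_\sigma=\{0\}$. Throughout I would write $G_n(y):=\{x\in K_n:\psi(x,y)\ge 0\}$, so that $S(\psi,K_n)=\bigcap_{y\in K_n}G_n(y)$, and note that $K_n=K\cap nB_X$ is a $\sigma$-compact convex set because $B_X$ is $\sigma$-compact and $K$ is $\sigma$-closed and convex.

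First I would extract two consequences of the standing assumption that $\psi$ is $\sigma$-tusc on $K_n$ (equivalently, $-\psi$ is $\sigma$-tlsc on $K_n$): (a) $S(\psi,K_n)$ is $\sigma$-closed, since for $x_0\notin S(\psi,K_n)$ one has $\psi(x_0,y)<0$ for some $y\in K_n$, hence there are $y'\in K_n$ and a $\sigma$-neighborhood $\mathcal V_\sigma(x_0)$ with $\psi(\cdot,y')<0$ on $\mathcal V_\sigma(x_0)$, so $\mathcal V_\sigma(x_0)$ misses $S(\psi,K_n)$; and (b), by the same argument, the family $\{G_n(y)\}_{y\in K_n}$ is $\sigma$-transfer closed, that is, $\bigcap_{y}\mathrm{cl}^\sigma(G_n(y))=\bigcap_{y}G_n(y)=S(\psi,K_n)$. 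Thus only the nonemptiness of $S(\psi,K_n)$ is at stake, and this I would get by a KKM/compactness argument on $K_n$, working with the $\sigma$-closures $\mathrm{cl}^\sigma(G_n(y))$ and invoking (b) at the end. In case (i), anti-local domination is precisely the finite intersection property of $\{G_n(y)\}_{y\in K_n}$, so $\sigma$-compactness of $K_n$ forces $\bigcap_y\mathrm{cl}^\sigma(G_n(y))\ne\emptyset$. In case (ii), transfer quasi-convexity gives, for each finite $\{y_1,\dots,y_m\}\subseteq K_n$, points $x_1,\dots,x_m\in K_n$ with $\mathrm{co}\{x_i:i\in L\}\subseteq\bigcup_{i\in L}G_n(y_i)$ for all $L\subseteq\{1,\dots,m\}$; the finite-dimensional KKM lemma on the simplex $\mathrm{co}\{x_1,\dots,x_m\}$ applied to the closed sets $\mathrm{cl}^\sigma(G_n(y_i))$ yields the finite intersection property of $\{\mathrm{cl}^\sigma(G_n(y))\}$ — this recovers the noncoercive form of \cite[Theorem 3.1]{NT}. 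Case (iii) reduces to (i): by Remark \ref{Remark c.a.qm}(i) the restriction of $\psi$ to any finite $Z\subseteq K_n$ is cyclically anti-quasimonotone, and since $Z$ is compact with each $[\psi(\cdot,y)\ge 0]$ trivially closed in $Z$, Theorem \ref{main thm1} produces $\bar x\in Z$ with $\psi(\bar x,y)\ge 0$ for all $y\in Z$, which is exactly anti-local domination of $\psi|_{K_n\times K_n}$ in $y$. Case (iv) reduces to (ii) by taking $x_i=y_i$: if $x\in\mathrm{co}\{y_i:i\in L\}$ and $\psi(x,y_i)<0$ for all $i\in L$, quasi-convexity of $\psi(x,\cdot)$ makes $\{y:\psi(x,y)<0\}$ convex, hence it contains $x$, giving $\psi(x,x)<0$ and contradicting $\psi(x,x)\ge 0$. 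Finally, for (v) I would use a Minty argument: put $F_n(y):=\{x\in K_n:\psi(y,x)\le 0\}$; quasi-concavity of $\psi(\cdot,x)$ together with $\psi(x,x)=0$ makes $y\mapsto F_n(y)$ a KKM map, so Ky Fan's lemma gives $\bigcap_y\mathrm{cl}^\sigma(F_n(y))\ne\emptyset$, while pseudomonotonicity of $-\psi$ gives $F_n(y)\subseteq G_n(y)$, whence $\emptyset\ne\bigcap_y\mathrm{cl}^\sigma(F_n(y))\subseteq\bigcap_y\mathrm{cl}^\sigma(G_n(y))=S(\psi,K_n)$ by (b).

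The step I expect to be the main obstacle is not any single case but the passage from finite intersections to the full intersection $\bigcap_{y\in K_n}G_n(y)$: it requires genuine (not merely sequential) $\sigma$-compactness of $K_n$ — which is why the corollary strengthens $(H_\sigma)$ by assuming $B_X$ is $\sigma$-compact — together with a KKM-type lemma valid for the topology $\sigma$ and the transfer-closedness observation (b), which is what permits replacing the possibly non-$\sigma$-closed sets $G_n(y)$ by their $\sigma$-closures in the absence of any genuine semicontinuity of $\psi$. A secondary point requiring care is the bookkeeping in the two reductions (iii)$\Rightarrow$(i) and (iv)$\Rightarrow$(ii), where one must match the quantifiers in Definition \ref{Def tlc} precisely.
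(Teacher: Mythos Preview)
Your proposal is correct and matches the paper's proof almost step for step: both deduce the corollary from Theorem~\ref{main thm} by showing that each $S(\psi,K_n)$ is $\sigma$-closed (via $\sigma$-tusc and transfer closedness, exactly your points (a)--(b)) and nonempty; the paper handles (i)--(ii) by citing \cite[Theorem~3.1 and Corollary~3.1]{NT} where you sketch the underlying FIP/KKM arguments, reduces (iii) to (i) via \cite[Proposition~1]{CS} where you invoke Theorem~\ref{main thm1} on finite sets (same content), and reduces (iv) to (ii) exactly as you do by taking $x_i=y_i$.

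The one genuine local difference is case~(v). You run a direct Minty-type argument: define $F_n(y)=\{x\in K_n:\psi(y,x)\le 0\}$, use quasi-concavity of $\psi(\cdot,x)$ and $\psi(x,x)=0$ to see that $y\mapsto F_n(y)$ is KKM, then use pseudomonotonicity of $-\psi$ to pass from $F_n$ to $G_n$. The paper instead shows that (v) again implies (ii) with the choice $x_i=y_i$: from $\psi(x,y_i)<0$ for all $i$ it extracts $\psi(y_i,x)>0$ via pseudomonotonicity of $-\psi$, and then quasi-concavity of $\psi(\cdot,x)$ forces $\psi(x,x)>0$, contradicting $\psi(x,x)=0$. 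Both arguments are short and valid; the paper's has the small structural advantage of unifying (iv) and (v) under the same reduction to (ii), while yours is the classical Minty linearization and avoids re-entering the transfer-quasi-convex machinery.
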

\begin{proof}
Let $n\in\Bbb N$. By hypothesis $K_n$ is $\sigma$-compact. Since $\psi:K_n\times K_n\rightarrow \Bbb{R}$ is $\sigma$-tusc on $K$, then by the proof of \cite[Theorem 3.1]{NT} we can see that, $y\mapsto \{x\in K:\psi(x,y)\geq 0\}$ is $\sigma$-transfer closed on $K_n$. Therefore
	$$S(\psi,K_n)=\bigcap_{y\in K_n}\mathrm{cl}^\sigma (\{x\in K_n:\psi(x,y)\geq 0\}).$$
Hence for each $n\in \Bbb N$, $S(\psi,K_n)$ is $\sigma$-closed.\\
Therefore by Theorem \ref{main thm}, it is sufficient to show that in each case, for each $n\in \Bbb N$, $S(\psi,K_n)$ is a nonempty set.
\begin{itemize}
\item[(i)] By \cite[Theorem 3.1]{NT}, for each $n\in \Bbb N$, $S(\psi,K_n)$ is a nonempty set.
\item[(ii)] By \cite[Corollary 3.1]{NT}, for each $n\in \Bbb N$, $S(\psi,K_n)$ is a nonempty set.
\item[(iii)] By \cite[Proposition 1]{CS}, and Remark \ref{Remark c.a.qm}, for each $n\in \Bbb{N}$, $\psi|_{K_n\times K_n}(x, y)$ is anti-locally dominated in $y$. Then by $(i)$, for each $n\in \Bbb N$, $S(\psi,K_n)$ is a nonempty set.
\item[(iv)] 	For each $n\in \Bbb{N}$, $\psi|_{K_n\times K_n}(x, y)$ is transfer quasi-convex in $y$. Indeed, by the contrary suppose that, there exists $m\in\Bbb N$, such that $\psi|_{K_m\times K_m}(x, y)$ is not transfer quasi-convex in $y$. Therefore there exists finite subset $\{y_1,y_2,...,y_n\}\subseteq K_m$ and $x\in \mathrm{co}\{y_i: 1\leq i\leq n\}$ such that $\min_{i\in\{1,2,...,n\}}-\psi(x,y_i)>0$. Thus for each $i\in\{1,2,...,n\}$, $\psi(x,y_i)<0$. Then by quasi-convexity of $\psi(x,.)$, we have
$$\psi(x,x)\leq \max\{\psi(x,y_i): 1\leq i\leq n\}<0.$$
Which is a contradiction.\\
Then by $(ii)$, for each $n\in \Bbb N$, $S(\psi,K_n)$ is a nonempty set.\\
\item[(v)] Again, for each $n\in \Bbb{N}$, $\psi|_{K_n\times K_n}(x, y)$ is transfer quasi-convex in $y$. Because, by the contrary suppose that, there exists $m\in\Bbb N$, such that $\psi|_{K_m\times K_m}(x, y)$ is not transfer quasi-convex in $y$. Therefore, similarly to the proof of $(iv)$, there exists finite subset $\{y_1,y_2,...,y_n\}\subseteq K_m$ and $x\in \mathrm{co}\{y_i: 1\leq i\leq n\}$ such that for each $i\in\{1,2,...,n\}$, $\psi(x,y_i)<0$. Since $-\psi$ is pseudomonotone for each $i\in\{1,2,...,n\}$, we have $-\psi(y_i,x)<0$. Indeed, if there exists $i_0\in\{1,2,...,n\}$ such hat $-\psi(y_{i_0},x)\geq 0$, then we have $-\psi(x,y_{i_0})\leq 0$ and it is impossible. Now since $\psi(.,x)$ is quasi-concave,
$$-\psi(x,x)\leq \max\{-\psi(y_i,x): 1\leq i\leq n\}<0.$$
Which is a contradiction with $\psi(x,x)=0$.\\
Then by $(ii)$, for each $n\in \Bbb N$, $S(\psi,K_n)$ is a nonempty set.
\end{itemize}
\end{proof}
The following corollary is noncoercive version of Theorem \ref{main thm1} (\cite[Theorem 2.7]{KQ}).
\begin{corollary}
Suppose that $(X,\sigma)$ has the assumption $(H_\sigma)$, $B_X$ is $\sigma$-compact, and $K$ is a nonempty $\sigma$-closed subset of $X$. Assume that $\psi: K\times K \to \Bbb{R}$ satisfies the condition $(K_{\sigma})$ and for each $y\in K$, the set $[\psi(.,y)\geq 0]=\{x\in K:\psi(x,y)\geq 0\}$ is closed. Let $\psi$ be cyclically anti-quasimonotone and let $R_{\sigma}^E\subseteq\{0\}$. Then $S(\psi,K)$ is nonempty and $(S(\psi,K))^\infty_{\sigma}=\{0\}$.
\end{corollary}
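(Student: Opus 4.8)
\medskip

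The plan is to derive this corollary from Theorem~\ref{main thm}. All hypotheses of Theorem~\ref{main thm} other than the nonemptiness and $\sigma$-closedness of the truncated solution sets $S(\psi,K_n)$ are already assumed here: assumption $(H_\sigma)$, the $\sigma$-closedness of $K$, condition $(K_\sigma)$, and $R_{\sigma}^E\subseteq\{0\}$. So it suffices to prove that for every $n\in\Bbb{N}$ the set $S(\psi,K_n)$ is nonempty and $\sigma$-closed, and then to invoke Theorem~\ref{main thm} to conclude that $S(\psi,K)$ is nonempty with $(S(\psi,K))^\infty_{\sigma}=\{0\}$.

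Fix $n\in\Bbb{N}$. First I would note that $K_n=K\cap nB_X$ is a $\sigma$-closed subset of $nB_X$, and $nB_X$ is $\sigma$-compact (being the image of the $\sigma$-compact ball $B_X$ under the $\sigma$-homeomorphism $x\mapsto nx$); hence $K_n$, equipped with the subspace $\sigma$-topology, is a compact topological space. Next I would check that $\psi|_{K_n\times K_n}$ satisfies the hypotheses of Theorem~\ref{main thm1} with $Z=K_n$: by Remark~\ref{Remark c.a.qm}(i) the restriction is again cyclically anti-quasimonotone, and for each $y\in K_n$ the set $\{x\in K_n:\psi(x,y)\geq 0\}=[\psi(.,y)\geq 0]\cap K_n$ is $\sigma$-closed in $K_n$, since $[\psi(.,y)\geq 0]$ is (by hypothesis) $\sigma$-closed in $K$ and $K_n$ is $\sigma$-closed in $K$. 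Theorem~\ref{main thm1} then yields a point $\bar x_n\in K_n$ with $\psi(\bar x_n,y)\geq 0$ for all $y\in K_n$, i.e.\ $S(\psi,K_n)\neq\emptyset$.

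For the $\sigma$-closedness I would write $S(\psi,K_n)=\bigcap_{y\in K_n}\{x\in K_n:\psi(x,y)\geq 0\}$, an intersection of sets each of which is $\sigma$-closed in $K_n$ by the observation just made; since $K_n$ is itself $\sigma$-closed in $X$, it follows that $S(\psi,K_n)$ is $\sigma$-closed in $X$. Having verified both properties for every $n\in\Bbb{N}$, Theorem~\ref{main thm} applies directly and gives the assertion. The only genuine subtlety — and precisely the reason the extra hypothesis ``$B_X$ is $\sigma$-compact'' is imposed on top of $(H_\sigma)$ — is that Theorem~\ref{main thm1} demands an honestly compact topological space, whereas $(H_\sigma)$ alone only supplies sequential $\sigma$-compactness; everything else is just matching hypotheses, so I do not anticipate any essential obstacle.
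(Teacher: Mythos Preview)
Your proposal is correct and follows essentially the same approach as the paper's own proof: both verify that each $K_n$ is $\sigma$-compact, apply Theorem~\ref{main thm1} (together with Remark~\ref{Remark c.a.qm}(i)) to get $S(\psi,K_n)\neq\emptyset$, observe that $S(\psi,K_n)$ is $\sigma$-closed as an intersection of $\sigma$-closed sets, and then invoke Theorem~\ref{main thm}. Your write-up is in fact slightly more detailed than the paper's, in particular your remark explaining why the hypothesis ``$B_X$ is $\sigma$-compact'' is needed on top of $(H_\sigma)$ is a clarification the paper leaves implicit.
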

\begin{proof}
Let $n\in\Bbb N$. By hypothesis $K_n$ is $\sigma$-compact. Then by \cite[Theorem 2.7]{KQ} for each $n\in \Bbb N$, $S(\psi,K_n)$ is a nonempty set. Also, since for each $y\in K_n$, the set $\{x\in K_n:\psi(x,y)\geq 0\}$ is $\sigma$-closed, then by
$$S(\psi,K_n)=\bigcap_{y\in K_n} \{x\in K_n:\psi(x,y)\geq 0\}=\bigcap_{y\in K_n}\mathrm{cl}^\sigma (\{x\in K_n:\psi(x,y)\geq 0\}),$$
$S(\psi,K_n)$ is $\sigma$-closed. Hence for each $n\in \Bbb N$, $S(\psi,K_n)$ is nonempty and $\sigma$-closed. Then, by Theorem \ref{main thm}, $S(\psi,K)$ is nonempty and $(S(\psi,K))^\infty_{\sigma}=\{0\}$.
\end{proof}
Related to the previous results, this problem naturally arises: When do we have $R_{\sigma}^E\subseteq\{0\}$?\\
We answer to this question in two following propositions.
\begin{proposition}\label{Prop.Replace.condition}
Let $K$ be a nonempty subset $X$ and let $\psi: K\times K \to \Bbb{R}$. Assume that for each $u\in K^\infty_{\sigma}\setminus \{0\}$ there exists $y\in K$ such that $(-\psi(.,y))^{\infty}_{\sigma}(u)>0$, then $R_{\sigma}^E\subseteq\{0\}$.
\end{proposition}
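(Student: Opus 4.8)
The plan is to argue by contradiction. Suppose $u \in R_\sigma^E$ with $u \neq 0$. Since $R_\sigma^E \subseteq K^\infty_\sigma$ by the very definition of $R_\sigma^E$, the point $u$ lies in $K^\infty_\sigma \setminus \{0\}$, so the hypothesis hands us a point $y \in K$ with $(-\psi(\cdot,y))^\infty_\sigma(u) > 0$. On the other hand, membership of $u$ in $R_\sigma^E$ says precisely that $(-\psi(\cdot,y))^{\sigma g}(u) \le 0$ for this same $y$. Thus the whole statement reduces to proving, for the single proper function $f := -\psi(\cdot,y)$ (extended to $X$ by $f \equiv +\infty$ off $K$, hence real-valued exactly on $K$), the one-sided comparison
$$f^{\sigma g}(u) \le 0 \ \Longrightarrow\ f^\infty_\sigma(u) \le 0,$$
which immediately contradicts $f^\infty_\sigma(u) > 0$ and forces $u = 0$.

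To get that comparison I would invoke the analytic formula of Proposition \ref{propo FFH}: from $f^{\sigma g}(u) \le 0 < 1$ there are sequences $t_n \to +\infty$ and $d_n \xrightarrow{\sigma} u$ with $\liminf_{n} f(t_n d_n) < 1$. Extracting a subsequence $(n_j)$ along which the values converge to this $\liminf$ and discarding finitely many indices, I may assume $f(t_{n_j} d_{n_j}) < 1$ for all $j$; in particular each $t_{n_j}d_{n_j}$ then belongs to $K$, the effective domain of $f$, so no infinite values intrude, and $t_{n_j} > 0$ for $j$ large. Dividing the inequality by $t_{n_j}$ gives $f(t_{n_j} d_{n_j})/t_{n_j} < 1/t_{n_j} \to 0$, whence $\liminf_j f(t_{n_j} d_{n_j})/t_{n_j} \le 0$. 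Since $(t_{n_j})$ and $(d_{n_j})$ are admissible in the defining infimum of $f^\infty_\sigma(u)$, this yields $f^\infty_\sigma(u) \le 0$, the contradiction sought.

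The step I would watch most carefully is exactly this transition between the two asymptotic constructions, because the paper warns that in general there is no inequality between $f^{\sigma g}$ and $f^\infty_\sigma$: the former is assembled from the raw values $f(t_n d_n)$, the latter from the rescaled values $f(t_n u_n)/t_n$. What saves the one-sided implication is that a mere \emph{upper} bound on $f(t_n d_n)$ (as opposed to two-sided control) degrades, after division by $t_n \to +\infty$, into an asymptotically nonpositive quantity; so the argument is insensitive to whether $f(t_n d_n)$ remains large and positive or runs off to $-\infty$. Everything else is bookkeeping: recording that $R_\sigma^E \subseteq K^\infty_\sigma$ so the hypothesis genuinely applies to $u$, and that subsequences of $t_n \to +\infty$ still tend to $+\infty$ while subsequences of $\sigma$-convergent sequences still $\sigma$-converge, so the extracted pair is admissible for $f^\infty_\sigma$.
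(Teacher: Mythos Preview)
Your proof is correct and rests on the same core observation as the paper's: an upper bound on $f(t_n d_n)$ forces $\liminf f(t_n d_n)/t_n \le 0$, equivalently a strictly positive lower bound on the latter forces the former to blow up. The only cosmetic difference is direction: the paper argues the contrapositive directly, choosing $\alpha\in(0,f^\infty_\sigma(u))$ and deducing that for \emph{every} admissible pair $(t_n,d_n)$ one has $-\psi(t_n d_n,y)\ge \alpha t_n > \alpha$ eventually, whence $f^{\sigma g}(u)\ge \alpha>0$; you instead start from $f^{\sigma g}(u)\le 0$, produce one admissible pair witnessing a bounded $\liminf$, and divide through. The paper's route avoids the subsequence extraction, but both arguments are equally short and equivalent in substance.
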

\begin{proof}
Let $u\in K^\infty_{\sigma}\setminus \{0\}$. Hence there exists $y\in K$ such that $(-\psi(.,y))^{\infty}_{\sigma}(u)>0$. Then, there exists $\alpha>0$ such that for all $t_n\rightarrow +\infty$ and $d_n\xrightarrow{\sigma} u$,
	$$\liminf_{n\rightarrow+\infty} \frac{-\psi(t_nd_n,y)}{t_n}>\alpha.$$
	Therefore for large enough $n\in\Bbb{N}$,
	$$-\psi(t_nd_n,y)\geq \alpha t_n>\alpha.$$
	Thus
	$$\liminf_{n\rightarrow+\infty}-\psi(t_nd_n,y)\geq\alpha.$$
	Hence by Proposition \ref{propo FFH}, we conclude that $(-\psi(.,y))^{\sigma g}(u)>0$. Then $R_{\sigma}^E\subseteq\{0\}$.
\end{proof}\qed
\begin{remark}\label{Remark.weak asymptotic}
The converse of Proposition \ref{Prop.Replace.condition} is not necessarily true. Because, let $\psi: \Bbb{R}\times \Bbb{R}\to \Bbb{R}$ is defined by $\psi(x,y)=y$ for each $x,y\in\Bbb{R}$. Then for every $u\in \Bbb R$ and $y\in (-\infty,0)$ we have $(-\psi(.,y))^{\sigma g}(u)=y>0$, so $R_{\sigma}^E\subseteq\{0\}$. But, for $u\neq 0$ and $y\in\Bbb{R}$, $(-\psi(.,y))^{\infty}_{\sigma}(u)=0$.
\end{remark}
Also, we have the following proposition.
\begin{proposition}\label{Prop.converse.main thm}
Let $K$ be a nonempty convex subset of $X$ and let $\psi: K\times K \to \Bbb{R}$. Assume that for each $y\in K$, $\psi(.,y)$ is quasi-concave and $(n,y)\mapsto [-\psi(.,y)\leq \frac{1}{n}]$ is $\sigma$-intersectionally closed on $\Bbb{N}\times K$. If $S(\psi,K)$ is a nonempty $\sigma$-closed set and $(S(\psi,K))^\infty_{\sigma}=\{0\}$, then $R_{\sigma}^E\subseteq\{0\}$.
\end{proposition}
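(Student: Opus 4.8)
The plan is to fix an arbitrary $u\in R_\sigma^E$ and prove $u=0$ by exhibiting $u$ as an element of $(S(\psi,K))^\infty_\sigma$, which equals $\{0\}$ by hypothesis. Throughout I abbreviate
$$G(n,y):=\bigl[-\psi(.,y)\leq \tfrac{1}{n}\bigr]=\{x\in K:\psi(x,y)\geq -\tfrac{1}{n}\},\qquad (n,y)\in\Bbb{N}\times K.$$

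First I would translate the membership $u\in R_\sigma^E$ into statements about the sets $G(n,y)$. From the definition of the $\sigma g$-asymptotic function, the condition $(-\psi(.,y))^{\sigma g}(u)\leq 0$ says that $\inf\{\lambda:u\in([-\psi(.,y)\leq\lambda])^\infty_\sigma\}\leq 0$; this infimum is taken over a nonempty set, and since $0<\tfrac{1}{n}$, for each $n\in\Bbb{N}$ there is some $\lambda<\tfrac{1}{n}$ with $u\in([-\psi(.,y)\leq\lambda])^\infty_\sigma$. Because $[-\psi(.,y)\leq\lambda]\subseteq G(n,y)$, Proposition~\ref{Prop1}(i) gives $u\in(G(n,y))^\infty_\sigma$. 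Hence $u\in(G(n,y))^\infty_\sigma$ for every $(n,y)\in\Bbb{N}\times K$.

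Next I would compute the relevant intersections. Since $\bigcap_{n\in\Bbb{N}}\{x\in K:\psi(x,y)\geq -\tfrac{1}{n}\}=\{x\in K:\psi(x,y)\geq 0\}$, we get $\bigcap_{(n,y)\in\Bbb{N}\times K}G(n,y)=S(\psi,K)$, which is $\sigma$-closed by hypothesis; therefore the $\sigma$-intersectional closedness of $(n,y)\mapsto G(n,y)$ on $\Bbb{N}\times K$ yields $\bigcap_{(n,y)}\mathrm{cl}^\sigma(G(n,y))=\mathrm{cl}^\sigma(S(\psi,K))=S(\psi,K)$. Because $\psi(.,y)$ is quasi-concave and $K$ is convex, each $G(n,y)$ is convex, so each $\mathrm{cl}^\sigma(G(n,y))$ is $\sigma$-closed --- hence sequentially $\sigma$-closed --- and convex, and it contains the nonempty set $S(\psi,K)$. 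Thus the equality case of Proposition~\ref{Prop1}(iv) applies to the family $\{\mathrm{cl}^\sigma(G(n,y))\}_{(n,y)\in\Bbb{N}\times K}$. Combining everything,
$$u\in\bigcap_{(n,y)}(G(n,y))^\infty_\sigma\subseteq\bigcap_{(n,y)}\bigl(\mathrm{cl}^\sigma(G(n,y))\bigr)^\infty_\sigma=\Bigl(\bigcap_{(n,y)}\mathrm{cl}^\sigma(G(n,y))\Bigr)^\infty_\sigma=(S(\psi,K))^\infty_\sigma=\{0\},$$
so $u=0$, and since $u\in R_\sigma^E$ was arbitrary, $R_\sigma^E\subseteq\{0\}$.

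I expect the one point demanding care to be the legitimacy of the equality in Proposition~\ref{Prop1}(iv): one must check that the $\sigma$-closure of each convex set $G(n,y)$ is again convex (a structural property of $\sigma$, valid in the running examples) and sequentially $\sigma$-closed, and --- crucially --- that each $\mathrm{cl}^\sigma(G(n,y))$ together with their common intersection $S(\psi,K)$ is nonempty, which is precisely where the hypothesis $S(\psi,K)\neq\emptyset$ enters. Everything else is routine unwinding of the definitions of $R_\sigma^E$, $f^{\sigma g}$, and $\sigma$-intersectional closedness, together with the monotonicity of sublevel sets under $(\cdot)^\infty_\sigma$.
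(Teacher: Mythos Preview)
Your proof is correct and follows essentially the same route as the paper: from $u\in R_\sigma^E$ you deduce $u\in\bigcap_{(n,y)}(G(n,y))^\infty_\sigma$, pass to the $\sigma$-closures, invoke $\sigma$-intersectional closedness together with the equality case of Proposition~\ref{Prop1}(iv), and land in $(S(\psi,K))^\infty_\sigma=\{0\}$. The only cosmetic differences are that you are slightly more explicit about why $(-\psi(.,y))^{\sigma g}(u)\le 0$ forces $u\in(G(n,y))^\infty_\sigma$ for every $n$, and you flag (as the paper does not) the need for $\mathrm{cl}^\sigma(G(n,y))$ to remain convex and sequentially $\sigma$-closed.
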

\begin{proof}
Let $u\in R_{\sigma}^E$. Then by the definition of $\sigma g$-asymptotic function, for each $n\in\Bbb{N}$ and $y\in K$, $u\in ([-\psi(.,y)\leq \frac{1}{n}])^{\infty}_{\sigma}$. Thus
\begin{equation}\label{Eq2}
u\in \bigcap_{(n,y)\in \Bbb{N}\times K}([-\psi(.,y)\leq \frac{1}{n}])^{\infty}_{\sigma}.
\end{equation}
Since $S(\psi,K)$ is a nonempty $\sigma$-closed set, $S(\psi,K)=\cap_{(n,y)\in \Bbb{N}\times K}[-\psi(.,y)\leq \frac{1}{n}]$, $(n,y)\mapsto [-\psi(.,y)\leq \frac{1}{n}]$ is $\sigma$-intersectionally closed on $\Bbb{N}\times K$ and for each $(n,y)\in \Bbb{N}\times K$, $[-\psi(.,y)\leq \frac{1}{n}]$ is convex, therefore by (\ref{Eq2}), Proposition \ref{Prop1}(i) and Proposition \ref{Prop1}(iv) we have
\begin{eqnarray*}
		u&\in& \bigcap_{(n,y)\in \Bbb{N}\times K}([-\psi(.,y)\leq \frac{1}{n}])^{\infty}_{\sigma}\\
		&&\subseteq \bigcap_{(n,y)\in \Bbb{N}\times K}(\mathrm{cl}^\sigma([-\psi(.,y)\leq \frac{1}{n}]))^{\infty}_{\sigma}\\
		&&= (\bigcap_{(n,y)\in \Bbb{N}\times K}\mathrm{cl}^\sigma([-\psi(.,y)\leq \frac{1}{n}]))^{\infty}_{\sigma}\\
		&&=(\mathrm{cl}^\sigma(\bigcap_{(n,y)\in \Bbb{N}\times K}[-\psi(.,y)\leq \frac{1}{n}]))^{\infty}_{\sigma}\\
		&&=(\mathrm{cl}^\sigma(S(\psi,K)))^{\infty}_{\sigma}
		=(S(\psi,K))^{\infty}_{\sigma}=\{0\}.
\end{eqnarray*}
Then $R_{\sigma}^E\subseteq\{0\}$.
\end{proof}\qed
Now by using Corollary \ref{main cor.} and Proposition \ref{Prop.converse.main thm} we obtain a necessary and sufficient optimality conditions for existence of solutions to $(EP)$, when $K$ is an unbounded subset of $X$.
\begin{corollary}
Suppose that $(X,\sigma)$ has the assumption $(H_\sigma)$, $B_X$ is $\sigma$-compact, and $K$ is a nonempty $\sigma$-closed and convex subset of $X$. Let $\psi: K\times K \to \Bbb{R}$ satisfies the condition $(K_{\sigma})$, and $\psi(x,x)=0$, for each $x\in K$. Also, assume that $-\psi$ is pseudomonotone and for each $y\in K$, $\psi(.,y)$ is a quasi-concave and $\sigma$-usc function. Then $R_{\sigma}^E\subseteq\{0\}$, iff $S(\psi,K)$ is nonempty and $(S(\psi,K))^\infty_{\sigma}=\{0\}$.
\end{corollary}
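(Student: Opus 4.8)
The plan is to establish the two implications separately, using the two machinery pieces already available: Corollary~\ref{main cor.}(v) for the ``only if'' direction and Proposition~\ref{Prop.converse.main thm} for the ``if'' direction. The key observation is that the hypotheses of this corollary were tailored precisely so that both of these prior results apply.

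\emph{Sufficiency ($R_{\sigma}^E\subseteq\{0\}$ $\Rightarrow$ $S(\psi,K)$ nonempty and $(S(\psi,K))^\infty_\sigma=\{0\}$).} First I would check that the standing hypotheses of Corollary~\ref{main cor.} hold: $(X,\sigma)$ satisfies $(H_\sigma)$, $B_X$ is $\sigma$-compact, $K$ is nonempty $\sigma$-closed and convex, and $\psi$ satisfies $(K_\sigma)$, all of which are assumed here. Next I must verify the two remaining ingredients of that corollary: that $\psi$ is $\sigma$-tusc on $K_n$ for each $n$, and that alternative (v) holds. The $\sigma$-usc assumption on $\psi(\cdot,y)$ implies $\sigma$-tusc on each $K_n$ (every $\sigma$-usc bifunction in its first argument is $\sigma$-tusc with $y'=y$ and the neighbourhood coming from usc). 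For alternative (v), the hypotheses ``$-\psi$ is pseudomonotone'', ``$\psi(\cdot,y)$ is quasi-concave for each $y$'', and ``$\psi(x,x)=0$ for each $x$'' are exactly what (v) requires. Hence Corollary~\ref{main cor.} applies and gives the conclusion.

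\emph{Necessity ($S(\psi,K)$ nonempty, $\sigma$-closed, $(S(\psi,K))^\infty_\sigma=\{0\}$ $\Rightarrow$ $R_{\sigma}^E\subseteq\{0\}$).} Here I would invoke Proposition~\ref{Prop.converse.main thm}, whose hypotheses are: $K$ convex (assumed), $\psi(\cdot,y)$ quasi-concave for each $y$ (assumed), and $(n,y)\mapsto[-\psi(\cdot,y)\leq\frac1n]$ is $\sigma$-intersectionally closed on $\Bbb N\times K$. The first two are immediate. The point requiring a short argument is the $\sigma$-intersectional closedness. Since $\psi(\cdot,y)$ is $\sigma$-usc, each sublevel set $[-\psi(\cdot,y)\leq\frac1n]$ is already $\sigma$-closed, so $\mathrm{cl}^\sigma([-\psi(\cdot,y)\leq\frac1n])=[-\psi(\cdot,y)\leq\frac1n]$; therefore
\[
\bigcap_{(n,y)}\mathrm{cl}^\sigma\!\left([-\psi(\cdot,y)\leq\tfrac1n]\right)=\bigcap_{(n,y)}[-\psi(\cdot,y)\leq\tfrac1n]=S(\psi,K),
\]
and since $S(\psi,K)$ is $\sigma$-closed the right-hand intersection equals its own $\sigma$-closure, giving $\sigma$-intersectional closedness. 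Then Proposition~\ref{Prop.converse.main thm} yields $R_{\sigma}^E\subseteq\{0\}$.

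\emph{Expected main obstacle.} The routine verifications (usc $\Rightarrow$ tusc, usc $\Rightarrow$ closed sublevel sets) are straightforward, so the only genuine content is making sure the sublevel-set identity $S(\psi,K)=\bigcap_{(n,y)\in\Bbb N\times K}[-\psi(\cdot,y)\leq\frac1n]$ is correct: $x\in S(\psi,K)$ iff $\psi(x,y)\geq0$ for all $y$, iff $-\psi(x,y)\leq0$ for all $y$, iff $-\psi(x,y)\leq\frac1n$ for all $n$ and all $y$ --- the last ``iff'' holding because $-\psi(x,y)\leq\frac1n$ for every $n\in\Bbb N$ forces $-\psi(x,y)\leq0$. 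This elementary chain is the linchpin of the necessity half, and once it is in place the corollary follows by simply citing Corollary~\ref{main cor.}(v) and Proposition~\ref{Prop.converse.main thm}.
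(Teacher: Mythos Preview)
Your proposal is correct and follows essentially the same route as the paper: derive $\sigma$-closedness of $S(\psi,K)$ and of the sublevel sets $[-\psi(\cdot,y)\le\tfrac1n]$ from the $\sigma$-usc hypothesis, then invoke Corollary~\ref{main cor.}(v) for the forward implication and Proposition~\ref{Prop.converse.main thm} for the reverse. Your write-up is in fact slightly more explicit than the paper's (you spell out why usc implies tusc and why the intersection equals $S(\psi,K)$), but the argument is the same.
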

\begin{proof}
Since for each $y\in K$, $\psi(.,y)$ is a $\sigma$-usc function, thus for each $y\in K$, the set $\{x\in K: \psi(x,y)\geq 0\}$ is $\sigma$-closed, and moreover, for each $(n,y)\in \Bbb{N}\times K$, $[-\psi(.,y)\leq \frac{1}{n}]$ is $\sigma$-closed. Therefore by $S(\psi, K)=\cap_{y\in K}\{x\in K: \psi(x,y)\geq 0\}$, $S(\psi, K)$ is a $\sigma$-closed set. Also, $(n,y)\mapsto [-\psi(.,y)\leq \frac{1}{n}]$ is $\sigma$-intersectionally closed on $\Bbb{N}\times K$. Then, this corollary is a consequence of Corollary \ref{main cor.} and Proposition \ref{Prop.converse.main thm}.
\end{proof}

\section{Application in Optimization}
In this section, as an application of results in the preceding sections, we establish a result for existence solutions to minimization problem.

Let $f: K\to\Bbb{R}$ be a function. We consider the following minimization problem:
$$(MP)\quad \min f(x) \quad \mbox{subject to} \quad x\in K.$$
The set of its solutions is denoted by $argmin(f,K)$. Let $\psi: K\times K\to\Bbb{R}$ is defined by $\psi(x,y)=f(y)-f(x)$ for all $x,y\in K$. Then it is clear that $\bar{x}\in argmin(f,K)$ iff, $\bar{x}\in S(\psi,K)$. Now by Theorem \ref{main thm} we have the following result, which is extended the result in \cite[Theorem 4.1]{H1}, to infinite dimensional spaces under weaker assumption of continuity and without quasi-convexity assumption.
\begin{theorem}\label{main theorem in minimization}	
Suppose that $(X,\sigma)$ has the assumption $(H_\sigma)$, $B_X$ is $\sigma$-compact, and $K$ is a nonempty $\sigma$-closed and convex subset of $X$. Let $f: K\to\Bbb{R}$ be bounded from below and let for each $n\in\Bbb{N}$, $f|_{K_n}$ be $\sigma$-transfer lower continuous. Suppose that for every $u\in K^\infty_{\sigma}\setminus \{0\}$, $f^{\sigma g}(u)>f^{\sigma g}(0)$. Then $(MP)$ has a solution and $(argmin(f,K))^\infty_{\sigma}=\{0\}$, if the following condition holds.
	\begin{itemize}
\item[$(K_{\sigma}^m)$] If there exists an unbounded sequence $(x_n)$ such that for each $n\in \Bbb{N}$, $x_n\in argmin(f,K_n)$, then there exist $(n_k)\subseteq (n)$, $(w_k)\subseteq K$ and $0 \neq d\in K^\infty_{\sigma}$ such that for each $k\in \Bbb{N}$, $w_k\in argmin(f,K_{n_k})$ and $\frac{w_k}{\|w_k\|}\xrightarrow{\sigma} d$.
	\end{itemize}
\end{theorem}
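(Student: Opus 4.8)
The plan is to apply Theorem~\ref{main thm} to the bifunction $\psi(x,y):=f(y)-f(x)$ on $K\times K$; equivalently, everything below can be routed through Corollary~\ref{main cor.}(i). As observed just before the statement, $\bar x\in argmin(f,K)$ iff $\bar x\in S(\psi,K)$, and likewise $argmin(f,K_n)=S(\psi,K_n)$ for each $n$, so the asserted conclusion is exactly ``$S(\psi,K)\neq\emptyset$ and $(S(\psi,K))^\infty_\sigma=\{0\}$.'' It therefore suffices to verify, for this $\psi$, the hypotheses of Theorem~\ref{main thm}: the condition $(K_\sigma)$, nonemptiness and $\sigma$-closedness of each $S(\psi,K_n)$, and $R_\sigma^E\subseteq\{0\}$.

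I would dispatch $R_\sigma^E\subseteq\{0\}$ first. For fixed $y\in K$ the function $x\mapsto-\psi(x,y)$ coincides with $f(\cdot)-f(y)$ (on all of $X$, using the $+\infty$-extension off $K$ in both terms), so by the analytic formula of Proposition~\ref{propo FFH} one gets $(-\psi(\cdot,y))^{\sigma g}(u)=f^{\sigma g}(u)-f(y)$ for every $u\in X$. Hence $u\in R_\sigma^E$ forces $u\in K^\infty_\sigma$ together with $f^{\sigma g}(u)\leq\inf_{y\in K}f(y)=\inf_K f$, and by Proposition~\ref{P. property qr}(i) the latter infimum equals $f^{\sigma g}(0)$. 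Thus a nonzero element of $R_\sigma^E$ would be a $u\in K^\infty_\sigma\setminus\{0\}$ with $f^{\sigma g}(u)\leq f^{\sigma g}(0)$, contradicting the hypothesis; so $R_\sigma^E\subseteq\{0\}$.

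Next, $(K_\sigma)$ for $\psi$ is a direct translation of $(K_\sigma^m)$: if $(x_n)$ is unbounded with $x_n\in S(\psi,K_n)=argmin(f,K_n)$, then $(K_\sigma^m)$ yields a subsequence $(x_{n_k})$, a sequence $(w_k)\subseteq K$ and $0\neq d\in K^\infty_\sigma$ with $w_k\in argmin(f,K_{n_k})$ and $w_k/\|w_k\|\xrightarrow{\sigma}d$; since $x_{n_k}$ and $w_k$ both minimize $f$ over $K_{n_k}$ we have $f(x_{n_k})=f(w_k)$, whence $\psi(x_{n_k},y)=f(y)-f(x_{n_k})=f(y)-f(w_k)=\psi(w_k,y)$ for all $y\in K_{n_k}$, which in particular gives the inequality required in $(K_\sigma)$. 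For the sets $S(\psi,K_n)$, note that $B_X$ being $\sigma$-compact and $K$ being $\sigma$-closed make $K_n=K\cap nB_X$ a $\sigma$-compact convex set; comparing definitions, $\sigma$-transfer lower continuity of $f|_{K_n}$ says exactly that $\psi$ is $\sigma$-tusc on $K_n$; and $\psi|_{K_n\times K_n}$ is anti-locally dominated in $y$, since for any finite $\{y_1,\dots,y_m\}\subseteq K_n$ the point $x=y_j$ with $f(y_j)=\min_i f(y_i)$ satisfies $\max_i(-\psi(x,y_i))=\max_i(f(x)-f(y_i))\leq0$. Consequently, exactly as in the proof of Corollary~\ref{main cor.}(i) (which invokes \cite[Theorem 3.1]{NT} and the transfer-closedness argument), each $S(\psi,K_n)$ is nonempty and $\sigma$-closed. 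With all hypotheses of Theorem~\ref{main thm} verified, we obtain $S(\psi,K)\neq\emptyset$ and $(S(\psi,K))^\infty_\sigma=\{0\}$, i.e.\ $(MP)$ has a solution and $(argmin(f,K))^\infty_\sigma=\{0\}$.

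The step I expect to be the real obstacle is the nonemptiness and $\sigma$-closedness of $argmin(f,K_n)$: one needs a Weierstrass-type conclusion from transfer lower continuity alone on the $\sigma$-compact set $K_n$, without any semicontinuity or convexity of $f$. What makes it work is recognizing that $\psi(x,y)=f(y)-f(x)$ is $\sigma$-tusc on $K_n$ and anti-locally dominated in $y$, so the existence machinery already developed for $(EP)$ (essentially \cite[Theorem 3.1]{NT}, repackaged in Corollary~\ref{main cor.}(i)) applies verbatim; the remaining two verifications are routine manipulations with the analytic formula for $f^{\sigma g}$ and with Proposition~\ref{P. property qr}(i).
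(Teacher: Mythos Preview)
Your proof is correct and follows essentially the same route as the paper: define $\psi(x,y)=f(y)-f(x)$, verify $(K_\sigma)$, $R_\sigma^E\subseteq\{0\}$, and nonemptiness/$\sigma$-closedness of each $S(\psi,K_n)$, and then invoke Theorem~\ref{main thm}. The only cosmetic difference is that for the last point the paper appeals directly to the Weierstrass-type result \cite[Theorem~2]{T1} (transfer lower continuity on a compact set yields a nonempty $\sigma$-compact $argmin$), whereas you obtain the same conclusion by checking $\sigma$-tusc and anti-local domination and passing through Corollary~\ref{main cor.}(i)/\cite[Theorem~3.1]{NT}; these are equivalent ways of packaging the same fact.
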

\begin{proof}
Let $\psi: K\times K\to\Bbb{R}$ is defined by $\psi(x,y)=f(y)-f(x)$ for all $x,y\in K$. By \cite[Theorem 2]{T1} for each $n\in\Bbb{N}$, $argmin(f,K_n)=S(\psi,K_n)$ is nonempty and $\sigma$-compact. Also it is not hard to see that the condition $(K_{\sigma}^m)$ implies that, $\psi$ satisfies the condition $(K_{\sigma})$.\\
Let $u\in K^\infty_{\sigma}\setminus \{0\}$, therefore by hypothesis and Proposition \ref{P. property qr}(i) we have, $f^{\sigma g}(u)>f^{\sigma g}(0)=\inf_K f$. Thus there exists $y\in K$ such that $f^{\sigma g}(u)>f(y)$. Hence by Proposition \ref{propo FFH}, $(-\psi(.,y))^{\sigma g}(u)>0$. Therefore $R_{\sigma}^E\subseteq\{0\}$. Then by $argmin(f,K)=S(\psi,K)$ and Theorem \ref{main thm}, $argmin(f,K)$ is nonempty and $(argmin(f,K))^\infty_{\sigma}=\{0\}$.
\end{proof}
Note that, the condition $(K_{\sigma}^m)$ is an extension of condition $(K)$, that introduced in \cite{MM}, to infinite dimensional spaces.\\
In the following example, we illustrate Theorem \ref{main theorem in minimization}.
\begin{example}
Let $(X,\|.\|)=(l_2,\|.\|_2)$ and let $\sigma$ be the weak topology. Suppose that  $f:l_2\rightarrow \Bbb R$  is defined by
	$$f(x)=\left\{
	\begin{array}{ll}
	\tan^{-1}\|x\|_2, \quad\quad\quad\quad & \|x\|_2\geq 1,\\
	\frac{1}{5}\|x\|_2, & \frac{1}{2}<\|x\|_2<1,\\
	-1,&\|x\|_2\leq \frac{1}{2}.
	\end{array}\right.$$
It's not hard to see that, for each $n\in\Bbb N$, $f$ is $\sigma$-transfer lower continuous on $K_n$ and $f$ satisfies the condition $(K_\sigma^m)$.

Since
	$$[f\leq \lambda]=\left\{
	\begin{array}{ll}
	l_2, \quad\quad\quad\quad & \lambda\geq\frac{\pi}{2},\\
	\{x\in l_2: \|x\|_2\leq \tan \lambda\}, &~ \frac{\pi}{4}\leq \lambda<\frac{\pi}{2},\\
	\{x\in l_2:\|x\|_2<1\},&\frac{1}{5}<\lambda<\frac{\pi}{4},\\
	\{x\in l_2:\|x\|_2\leq 5\lambda\},&\frac{1}{10}<\lambda<\frac{1}{5},\\
	\{x\in l_2:\|x\|_2\leq \frac{1}{2}\},&-1\leq \lambda<\frac{1}{10},\\
	\emptyset,&\lambda<-1,
	\end{array}\right.$$
then,
	$$({[f\leq \lambda]})^\infty_{\sigma}=\left\{
	\begin{array}{ll}
	l_2, \quad\quad\quad\quad & \lambda\geq\frac{\pi}{2},\\
	\{0\}, & -1\leq \lambda<\frac{\pi}{2},\\
	\emptyset,&\lambda<-1.
	\end{array}\right.$$
	Therefore,
	$$f^{\sigma g}(u)=\left\{
	\begin{array}{ll}
	\frac{\pi}{2}, \quad\quad\quad\quad & u\neq 0,\\
	-1, & u=0.
	\end{array}\right.$$
	Hence, $f^{\sigma g}(u)>\inf f$ for all $u\neq 0$. Then, all the conditions in Theorem \ref{main theorem in minimization} are satisfied.
\end{example}

By the following example we see that, Theorem \ref{main theorem in minimization} extend sufficient conditions for existence of solutions to
minimization problems in \cite[Theorem 4.1]{H1}, even in finite dimensional spaces.
\begin{example}
Let $X=\Bbb R$ and let $\sigma$ be the usual topology on $\Bbb R$.	Suppose that $f:\Bbb R\rightarrow \Bbb R$ is defined by
	$$f(x)=\left\{
	\begin{array}{ll}
		-tan^{-1}x,&~~~~~~~~~~ x\leq 0,\\
		x, &~~~~~~~~~~ 0<x\leq \frac{\sqrt{3}}{6},\\
		-x+ \frac{\sqrt{3}}{3}, & ~~~~~~~~~~\frac{\sqrt{3}}{6}\leq x< \frac{\sqrt{3}}{3},\\
		tan^{-1}x,&~~~~~~~~~~ x\geq \frac{\sqrt{3}}{3}.
	\end{array}\right.$$
	Therefore $f^{\sigma g}(u)=\frac{\pi}{2}>f^{\sigma g}(0)=\inf f=0$, for all $u\neq 0.$ Also, $f$ is $\sigma$-transfer lower continuous on $K_n$, for all $n\in \mathbb{N}$. Then by Theorem \ref{main theorem in minimization}, $argmin(f,\mathbb{R})$ is a nonempty $\sigma$-compact set, but $f$ is not $\sigma$-lsc nor quasi-convex.
\end{example}

\begin{acknowledgements}
The second author was partially supported by a grant from IPM (No. 1401460421).
\end{acknowledgements}


%
%



\end{document}